\newcommand{\bb}{\mathbb}
\newcommand{\C}{\bb C}
\newcommand{\h}{\bb H}
\newcommand{\Z}{\bb Z}
\newcommand{\R}{\bb R}
\newcommand{\Q}{\bb Q}
\newcommand{\cc}{\tilde{C}}
\newcommand\beann{\begin{eqnarray*}}
\newcommand\eeann{\end{eqnarray*}}
\newcommand{\vol}{\operatorname{vol}}
\newcommand{\Om}{\Omega}
\newcommand{\F}{\mathcal F}
\newcommand{\I}{\mathcal I}
\newcommand{\<}{\left\langle}
\renewcommand{\>}{\right\rangle}
\newcommand{\eps}{\epsilon}
\newcommand{\p}{\mathcal P}
\newtheorem{Theorem}{Theorem}
\numberwithin{Theorem}{section}
\newtheorem{lemma}[Theorem]{Lemma}
\newcommand{\vf}{\varphi}      
\newcommand{\foh}{\frac{1}{2}}
\newcommand{\gz}{\zeta}      
\newcommand{\mattwos}[4]
{\bigl( \begin{smallmatrix}
                        #1  & #2   \\
                        #3 &  #4
\end{smallmatrix} \bigr)
}
\newcommand{\bo}{{\bf1}}
\newtheorem*{lemma*}{Lemma}
\newtheorem*{question*}{Question}
\newtheorem*{theorem*}{Theorem}
\numberwithin{equation}{section}
\begin{document}
\title{Radial Density in Apollonian Packings}

\author{Jayadev S. Athreya, Cristian Cobeli, and Alexandru Zaharescu}


\email{jathreya@iilinois.edu}
\email{cristian.cobeli@imar.ro}

\email{zaharesc@iilinois.edu}

\address{J.A. and A.Z: Department of Mathematics, University of Illinois Urbana-Champaign, 1409 W. Green Street, Urbana, IL 61801\\  C.C. and A.Z. \textit{Simion Stoilow} Institute of Mathematics of the Romanian Academy 
21 Calea Grivitei, 
PO. Box 1-764, RO-70700, Bucharest,
Romania}

%

    \subjclass[2010]{primary 37A17; secondary 11B57, 52C26}
\keywords{Apollonian Circle Packings; Soddy Sphere Packings; Equidistribution; Horospheres}

\begin{abstract} Given $\p$,  an \textit{Apollonian Circle Packing}, 
and a circle $C_0 = \partial B(z_0, r_0)$ in $\p$, color the set of disks in
$\p$ tangent to $C_0$ red. What proportion of the concentric circle $C_{\epsilon} = \partial B(z_0,
r_0 +
\epsilon)$ is red, and what is the behavior of this quantity as $\epsilon \rightarrow 0$? Using equidistribution of closed horocycles on the modular
surface $\h^2/SL(2, \Z)$, we
show that the answer is $\frac{3}{\pi} = 0.9549\dots$ We also describe an observation due to Alex Kontorovich connecting the rate of this convergence in the Farey-Ford packing to the Riemann Hypothesis. For the analogous
problem for Soddy
Sphere packings, we find that the limiting radial density is $\frac{\sqrt{3}}{2V_T}=0.853\dots$,
where
$V_T$ denotes the volume of an ideal hyperbolic tetrahedron with dihedral angles $\pi/3$. 
\end{abstract}
\maketitle

\section{Introduction}\label{sec:intro} Following the beautiful series of articles ~\cite{GLMWY:2003, GLMWY:2005,
GLMWY1:2006, GLMWY2:2006} by Graham, Lagarias, Mallow, Wilks, Yan,
and Sarnak's elegant letter~\cite{Sarnak:2008} to Lagarias, the study of Apollonian
Circle Packings (ACPs) has been a topic of great interest to number theorists and homogeneous
dynamicists. In particular, counting and sieving problems have been the subject of much study, and
important recent advances have been made by Bourgain, E.~Fuchs, Kontorovich, Oh, Sarnak, Shah and
many others. A
crucial tool in these recent
developments has been dynamics on infinite-volume quotients of hyperbolic $3$-space $\h^3$, 
we refer the reader to~\cite{BF:2011, KO:2011}
and to the excellent surveys~\cite{Fuchs, Kontorovich:2013, Oh1:2013, Oh2:2013} for an overview of
the literature.

We consider the following geometric problem: if we fix an Apollonian packing,  a circle in that packing,
and a small $\epsilon > 0$, what is the probability that a
randomly chosen point at distance $\epsilon$ from the fixed circle
lies inside a circle of the packing that is tangent to the fixed circle?
Intuitively, as $\epsilon$ tends to zero, it becomes more and more
probable that a randomly chosen point as above does lie
inside a circle tangent to the given one. Thus, the basic question we study is this: \textit{is it true that for
any Apollonian packing and any circle
in the packing, the probability above tends to $1$ as $\epsilon$ tends to zero?}
As we will see below, there is a universal answer to this problem, in
the sense that the above probability does have a limit as $\epsilon$
tends to zero. Furthermore, the limit is not 1, but $3/\pi\approx
0.95493$, and indeed, this limit is the same for all Apollonian
packings and all circles in the packing. To prove this result, we use homogeneous dynamics on the (finite-volume)
modular surface $\h^2/SL(2, \Z)$. Our methods generalize to give a similar universal answer to the
analagous radial density problem for Soddy Sphere Packings
(SSPs) using equidistribution of horospheres on a finite-volume hyperbolic 3-orbifold.

\subsection{Apollonian Circle Packings} Recall that an ACP is given by starting with any
configuration of $3$ mutually tangent circles $C_1, C_2, C_3 \subset \hat{\C}  = \C \cup \infty$
(where circles through $\infty$ are straight lines). Apollonius' theorem, from which these packings
take their name, states that there are exactly two circles (say $C_0$ and $C_4$) tangent to all
three. Now, take any triple of circles from the initial quintipule $\{C_0, C_1, C_2, C_3, C_4\}$ and
construct the remaining mutually tangent circle, and proceed inductively to obtain a packing. An
example of the first few circles in a packing is shown in Figure~\ref{fig:acp}, where the circles
are labeled with their \emph{bends}, that is, the inverse of their radii.


\begin{figure} [tbh]
\hfill
\begin{minipage}[b]{0.47\textwidth}
   \includegraphics[width=0.99\textwidth]{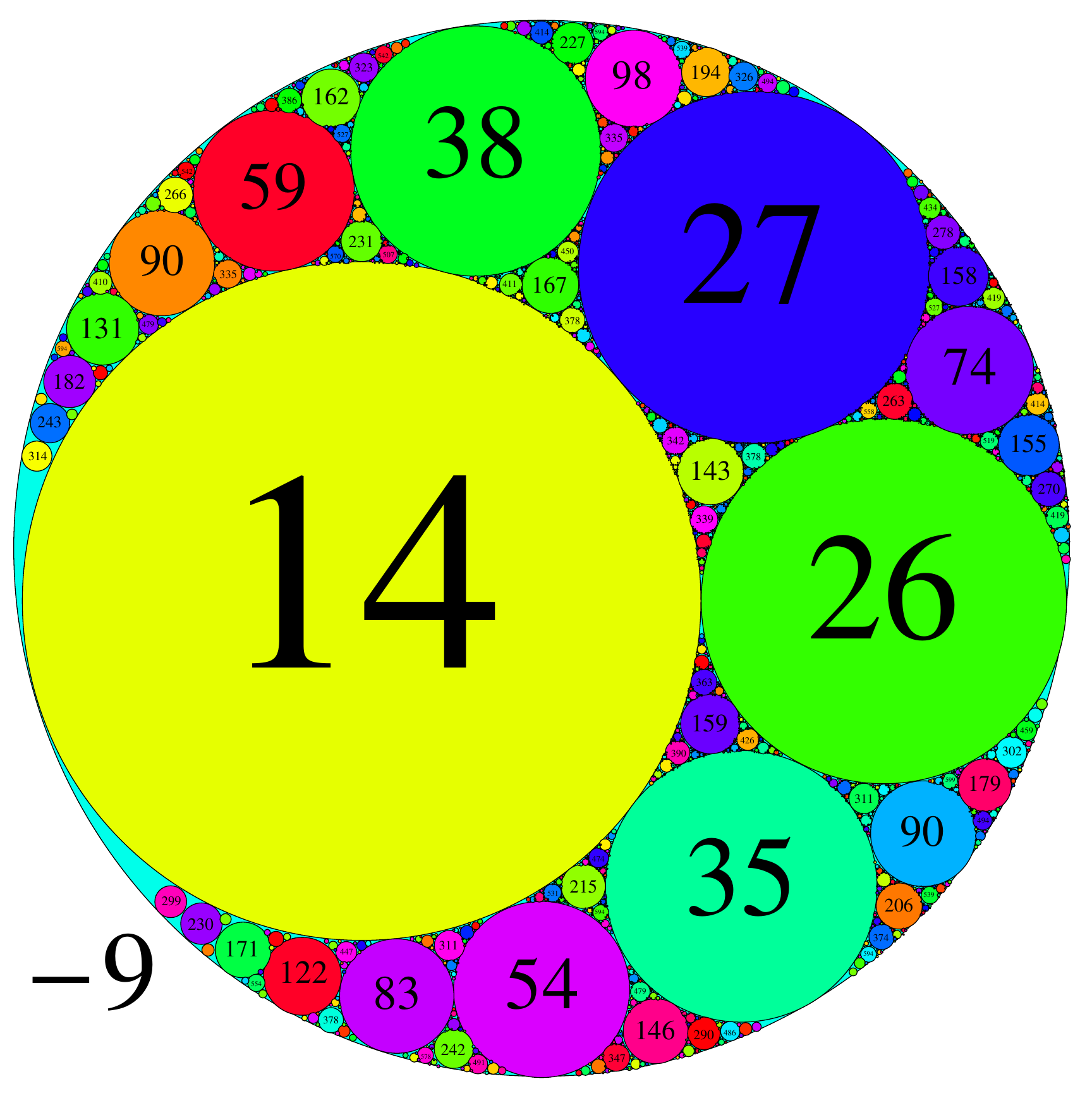}
\captionsetup{width=0.98\textwidth}
   \caption{\small Seven iterated insertions of circles of the Apollonian Circle Packing generated
by the triple of circles of \mbox{curvatures}: $[14, 26, 27]$, colored randomly.}
   \label{fig:acp}
\end{minipage}
\hfill\hfill\hfill\hfill\hfill
\begin{minipage}[b]{0.48\textwidth}
   \includegraphics[angle=-90,width=0.99\textwidth]{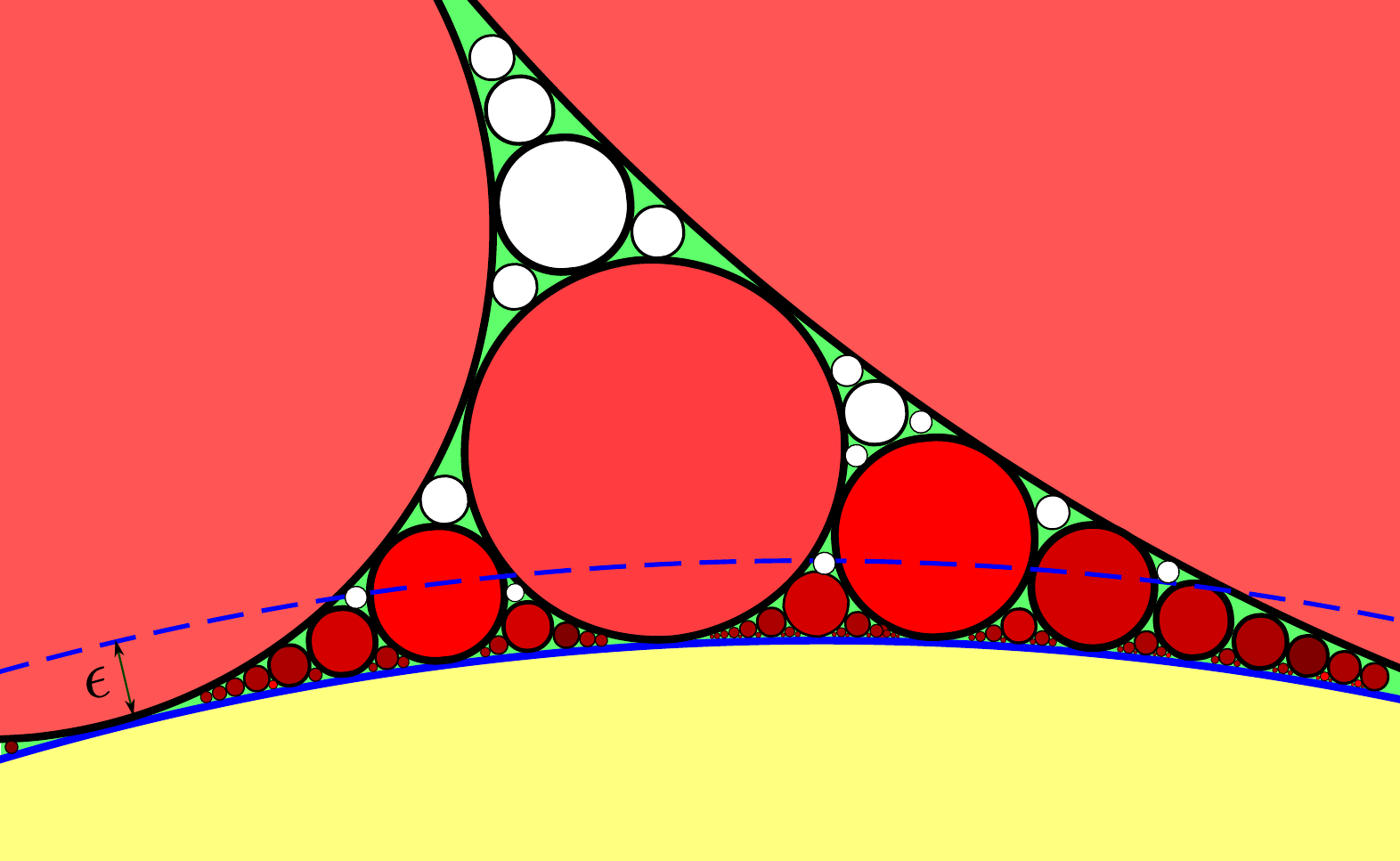}
\vspace*{7mm}
\captionsetup{width=1.0\textwidth}
   \caption{\small 
A portion of a bounded packing. Here $C_0$ is an arc of a 
\textcolor[rgb]{0.7,0.6,0}{yellow} disk.
The circles of ${\mathcal P}_0$  are in various shades of \textcolor{red}{red}, 
$C_0$ is in \textcolor{blue}{blue} and 
$C_\epsilon$ is in dashed \textcolor{blue}{blue}.
}
   \label{fig:config0}
\end{minipage}
\hspace*{\fill}
 \end{figure}

\subsection{Radial Density for Circle Packings}\label{subsec:density packings} 
The main problem we consider is the following: Fix an ACP 
\begin{equation*}
     \p = \bigcup C_i \subset \hat{\C},
\end{equation*}
where $C_i = \partial B(z_i, r_i), z_i \in \C, r_i>0$ or $C_i$ is given by a straight line. We write
$C_i \sim C_j$ if the circles are tangent. Given $C_0 = \partial B(z_0, r_0)$ in $\p$, let 
\begin{equation*}
     \p_0 = \bigcup_{C_i \in \p,\, C \sim C_0} D_i
\end{equation*}
denote the set of disks $D_i = B(z_i, p_i)$ tangent to $C_0$.
Take the concentric circle $C_{\epsilon} = \partial B(z_0, r_0 + \epsilon)$ (in the case of a
bounded packing, if $C_0$ is the outer circle, we set 
$C_{\epsilon} = \partial B(z_0, r_0 -\epsilon)$; 
and if $C_0$ is a line, we let $C_{\epsilon}$ be the parallel line on the `inside' of
the packing, see Figures~\ref{fig:config1}, ~\ref{fig:config2}, ~\ref{fig:config3}). 
A similar setting, where $C_0$ is just an arc of a circle of $\p$, is shown in
Figure~\ref{fig:config0}.

Let
$\mu_{\epsilon}$ denote the Lebesgue probability measure on 
$C_{\epsilon}$ (if $C_{\epsilon} \cong \R$, we take any subset of the packing sitting over a fixed length subinterval of $\R$). We have:

\begin{Theorem}\label{theorem:main:circle} 
Fix notation as above. Then 
\begin{equation*}
     \lim_{\epsilon \rightarrow 0} \mu_{\epsilon} (\p_0) = \frac{3}{\pi}.
\end{equation*}
\end{Theorem}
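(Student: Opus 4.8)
The plan is to turn this geometric statement into an equidistribution statement for a closed horocycle on the modular surface $X = \h^2/SL(2,\Z)$, after identifying the \emph{red} part of $C_\eps$ with a cusp neighborhood of $X$. First I would normalize the picture by a Möbius transformation $g$ with $g(C_0) = \R = \partial \h^2$, arranged so that the packing lies in the upper half-plane (this treats the bounded, unbounded, outer-circle and line cases uniformly). Since Möbius maps carry Apollonian packings to Apollonian packings and preserve tangency, the disks of $\p$ tangent to $C_0$ become disks tangent to $\R$ from above. The circles tangent to a fixed circle in an Apollonian packing are organized by the Farey/Stern--Brocot mediant operation (between two tangent circles both meeting $\R$, Apollonius produces the next one), so after a further real-affine change $z \mapsto az+b$ this family is exactly the Ford circles $\partial B\!\left(\tfrac pq + \tfrac{i}{2q^2},\, \tfrac{1}{2q^2}\right)$, $\gcd(p,q)=1$. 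This is the source of the universality: every $(\p, C_0)$ reduces to the single Farey--Ford configuration.

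The geometric heart is that the Ford disks are precisely the $SL(2,\Z)$-orbit of the standard horoball $B_\infty = \{\operatorname{Im} z > 1\}$: for $\gamma = \mattwos abcd$ with $c \neq 0$, $\gamma(\infty) = a/c$ and $\gamma(B_\infty)$ is the disk tangent to $\R$ at $a/c$ of radius $1/(2c^2)$. Hence $z$ lies in some Ford disk iff $\operatorname{Im}(\gamma z) > 1$ for some $\gamma$, i.e.\ iff $Y(z) := \sup_{\gamma \in SL(2,\Z)} \operatorname{Im}(\gamma z) > 1$; the $SL(2,\Z)$-invariant set $\{Y > 1\}$ descends to the cusp neighborhood of $X$. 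In the model case $C_0 = \R$, $C_\eps = \{y = \eps\}$, the configuration has period $1$, so
\[
     \mu_\eps(\p_0) = \int_0^1 \mathbf 1_{\{Y > 1\}}(x + i\eps)\,dx,
\]
the average of the cusp indicator over the full closed horocycle at height $\eps$. Equidistribution of closed horocycles as $\eps \to 0$ then gives $\mu_\eps(\p_0) \to \tfrac{1}{\vol(X)}\int_X \mathbf 1_{\{Y>1\}}\,d\vol$. As $\{Y=1\}$ is null, the indicator can be sandwiched between continuous functions, so this is legitimate, and the limit equals $\big(\tfrac{\pi}{3}\big)^{-1}\int_1^\infty\!\!\int_{-1/2}^{1/2} y^{-2}\,dx\,dy = \tfrac{3}{\pi}$.

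For a general $(\p, C_0)$ the curve $g(C_\eps)$ is not a horizontal line, but $g$ is conformal hence locally a similarity, so over a short arc where $|g'|$ is essentially constant $g(C_\eps)$ looks like a horizontal segment at a small height $\delta \to 0$ and distorts arc-length ratios by $1 + o(1)$. Partitioning $C_0$ into such arcs and applying equidistribution of the corresponding horocycle \emph{segments} --- each of which equidistributes to the same normalized area as the full horocycle --- shows the red proportion over every arc tends to the same constant $\tfrac{3}{\pi}$; recombining yields $\mu_\eps(\p_0) \to \tfrac{3}{\pi}$. The main obstacle is precisely this passage: justifying universality forces one to use equidistribution of horocycle segments (not just whole closed horocycles) together with control of the conformal distortion of $g(C_\eps)$, and to handle the \emph{non-compactly supported} cusp indicator $\{Y>1\}$ without loss of mass --- the latter is harmless here since the test function is bounded with null boundary, but it is exactly the quantitative rate of this cusp equidistribution that, in the Farey--Ford case, encodes the Riemann Hypothesis.
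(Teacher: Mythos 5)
Your overall skeleton---Möbius reduction to the Farey--Ford configuration, identification of the union of Ford disks with the $SL(2,\Z)$-orbit of the horoball $\{\operatorname{Im} z>1\}$, and the cusp-volume computation $(\pi/3)^{-1}\cdot 1 = 3/\pi$---matches the paper (Lemma~\ref{lemma:equiv} and \S\ref{sec:modular}). The divergence, and the gap, is in how you treat the image of the concentric circle $C_\eps$. The paper's point is that $M(C_\eps)$ is \emph{not} close to a horocycle: it is a Euclidean circle of radius comparable to $1/\eps$ in the upper half-plane, i.e.\ a hyperbolic metric sphere $\{x_\eps. a_{t_\eps}k\}_{k}$ of radius tending to infinity, and the paper invokes a version of the Eskin--McMullen theorem on equidistribution of expanding $K$-translates (with moving basepoint, Theorem~\ref{theorem:eskinmcmullen}) rather than any horocycle equidistribution statement. (By contrast, the circles $\cc_\eps$ of Theorem~\ref{theorem:main:horo} pass through the tangency point sent to $\infty$, so \emph{their} images are genuine horocycles, and that is where Shah's horosphere theorem is used.)

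Your substitute---cutting $C_0$ into arcs of fixed angular size $\delta_0$ and treating each image arc as a horocycle segment at height $\to 0$---has a quantitative hole. Over such an arc the image lies on a circle whose curvature is comparable to $\eps$ times a constant while its distance to $\R$ is also comparable to $\eps$; consequently the height of the image arc varies by a \emph{multiplicative} factor $1+O(\delta_0^2)$ that does not shrink as $\eps\to 0$. So the image arc is not asymptotically a horocycle segment, and your claim that the only distortion is of ``arc-length ratios by $1+o(1)$'' controls the tangential but not the normal direction. To close this you would need an additional estimate, e.g.\ that $\int \chi_{\F_0}(x+iy(x))\,dx$ and $\int \chi_{\F_0}(x+ih)\,dx$ differ by $O(\delta_0^2)$ uniformly in $h$ when $h\le y(x)\le h(1+c\delta_0^2)$ (a chord-length computation over the Ford circles, which does work out), together with equidistribution of horocycle \emph{sub}-segments of fixed length (a strictly stronger input than the Sarnak--Zagier closed-horocycle theorem; the paper gets it from Str\"ombergsson's Theorem~\ref{theorem:strom}, but only uses it for the short-interval refinement, not for Theorem~\ref{theorem:main:circle}). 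As written, the crucial step is asserted rather than proved; the cleaner fix is to do what the paper does and apply the expanding-spheres equidistribution theorem directly to $M(C_\eps)$. Your handling of the non-compactly supported cusp indicator (sandwiching, or passing to the complement $f=1-g$ as in \S\ref{sec:modular}) and the limit value are fine.
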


\medskip

We also consider another family of circles approximating $C_0$. Let $w_0$ denote the point of
tangency between $C_0$ and $C_1$, and for $0< \epsilon < 1$, let $\cc_{\epsilon}$ be the circle containing $C_0$
through $w_0$, tangent to $C_1$, and of radius $r_0 + \epsilon$. Let $\tilde{\mu}_{\epsilon}$ denote
the Lebesgue probability measure on $\cc_{\epsilon}$. Then we have:

\begin{Theorem}\label{theorem:main:horo} 
Fix notation as above. Then 
\begin{equation*}
     \lim_{\epsilon \rightarrow 0} \tilde{\mu}_{\epsilon} (\p_0) = \frac{3}{\pi}.
\end{equation*}

\end{Theorem}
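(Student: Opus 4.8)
The plan is to conformally normalize the configuration to the Farey--Ford packing and to extract the radial density from equidistribution of closed horocycles on $\h^2/SL(2,\Z)$, with the non-uniform weight produced by the normalization absorbed by a twisted form of that equidistribution.

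First I would apply a M\"obius transformation $g$ taking the common tangency point $w_0$ to $\infty$. As $C_0$, $C_1$ and $\cc_\eps$ are mutually tangent at $w_0$, they are carried to three parallel lines; post-composing with a real-affine map I can arrange $g(C_0)=\R$, $g(C_1)=\{\operatorname{Im}z=1\}$ and $g(\cc_\eps)=\{\operatorname{Im}z=\delta_\eps\}$, where $\delta_\eps\to0^+$ as $\eps\to0$. By the rigidity of the tangency pattern of the circles tangent to a fixed circle of an Apollonian packing, $g$ can be chosen so that the disks of $\p_0$ on the side of $C_0$ facing $\cc_\eps$ are carried exactly onto the standard Ford disks $C(p/q)$ (radius $1/(2q^2)$, tangent to $\R$ at $p/q$), together with the half-plane $\{\operatorname{Im}z>1\}$ coming from $C_1$; since $\delta_\eps<1$, the latter never meets $g(\cc_\eps)$ and is irrelevant.

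Next I would track the measure. Under $g$ the uniform probability measure $\tilde\mu_\eps$ on the circle $\cc_\eps$ becomes a Poisson--Cauchy measure $d\nu_\eps(x)=w_\eps(x)\,dx$ on the line $\{\operatorname{Im}z=\delta_\eps\}$, with $w_\eps(x)=\tfrac1\pi\,c_\eps/((x-x_\eps)^2+c_\eps^2)$. The crucial point is that the scale $c_\eps$ is governed by the image of $C_0$ rather than by $\delta_\eps$, so $c_\eps$ stays bounded away from $0$ and $w_\eps$ converges to a fixed probability density $w_0$. I would then encode coverage through the height function $\operatorname{ht}(z)=\max_{\gamma\in SL(2,\Z)}\operatorname{Im}(\gamma z)$: since $\operatorname{Im}(\gamma z)=\operatorname{Im}(z)/|cz+d|^2$ for the bottom row $(c,d)$ of $\gamma$, a direct computation shows that $x+i\delta$ lies in some bounded Ford disk exactly when $\operatorname{ht}(x+i\delta)>1$. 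Hence
\[
\tilde\mu_\eps(\p_0)=\int_{\R}\mathbf 1\!\left[\operatorname{ht}(x+i\delta_\eps)>1\right]w_\eps(x)\,dx .
\]
Using that $x\mapsto\operatorname{ht}(x+i\delta)$ is $1$-periodic and periodizing the weight to the continuous $1$-periodic density $W_\eps(x)=\sum_{n\in\Z}w_\eps(x+n)$ (with $\int_0^1 W_\eps=1$ and $W_\eps\to W_0$ uniformly), this is the weighted closed-horocycle integral $\int_0^1\mathbf 1[\operatorname{ht}(x+i\delta_\eps)>1]\,W_\eps(x)\,dx$.

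Finally I would feed this into horocycle equidistribution. The set $\{\operatorname{ht}>1\}$ is the embedded cusp horoball of area $1$, and $\vol(\h^2/SL(2,\Z))=\pi/3$, so the mean (zeroth Fourier) mode satisfies $\int_0^1\mathbf 1[\operatorname{ht}>1]\,dx\to 1/(\pi/3)=3/\pi$ as $\delta_\eps\to0$ --- this is the same equidistribution input that underlies Theorem~\ref{theorem:main:circle}. The main obstacle is the non-constant weight $W_\eps$: I must show that the nonzero Fourier modes $\int_0^1\mathbf 1[\operatorname{ht}(x+i\delta)>1]\,e^{2\pi i k x}\,dx$ tend to $0$ as $\delta\to0$, that is, the twisted (parametrized) equidistribution of the closed horocycle, which follows from the spectral theory of the modular surface. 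Granting this, only the mean of $W_0$ survives and, since $\int_0^1 W_0=1$, I obtain $\lim_{\eps\to0}\tilde\mu_\eps(\p_0)=3/\pi$. Along the way I would replace the indicator $\mathbf 1[\operatorname{ht}>1]$ by continuous sandwiching functions (its discontinuity set $\{\operatorname{ht}=1\}$ has measure zero) and control the two limits $\delta_\eps\to0$ and $w_\eps\to w_0$ simultaneously.
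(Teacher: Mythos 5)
Your proposal is correct, and its overall architecture coincides with the paper's: both normalize via the M\"obius map of Lemma~\ref{lemma:equiv}, which sends the tangency point $w_0$ to $\infty$, so that $\cc_{\epsilon}$ becomes a horizontal line of height tending to $0$ and $\tilde{\mu}_{\epsilon}$ pushes forward to an absolutely continuous (Cauchy-type) probability density on that line; both then conclude by a weighted equidistribution statement for the closed horocycle combined with the cusp-area computation from the end of \S\ref{sec:modular}. Where you diverge is in how the weighted equidistribution is justified. The paper simply quotes Shah's theorem (Theorem~\ref{theorem:shah}, a special case of \cite[Theorem 1.4]{Shah:1996}) for an absolutely continuous probability measure $\eta$, whereas you periodize the weight, Fourier-expand it, and reduce to the vanishing of the twisted integrals $\int_0^1\mathbf 1\left[\operatorname{ht}(x+i\delta)>1\right]e^{2\pi ikx}\,dx$ as $\delta\to 0$, obtained from the spectral theory of the modular surface. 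Your route is more self-contained and quantitative in spirit (essentially the Zagier--Sarnak spectral argument of \S\ref{riemann} with a character twist), and it has the additional merit of tracking the $\epsilon$-dependence of the pushforward density explicitly: the paper writes a single measure $\eta$, eliding the fact that the densities vary with $\epsilon$, and your observation that the Cauchy scale is governed by the distance from $M(\infty)$ to the line (hence bounded away from $0$, so $w_{\epsilon}\to w_0$) is exactly what makes that elision harmless. Note finally that your twisted input is itself a corollary of the paper's Theorem~\ref{theorem:shah}, since $(1+\cos 2\pi kx)\,dx$ is an absolutely continuous probability measure on $[0,1]$; so even if you prefer not to redo the spectral analysis, your argument closes by citing the same black box the paper uses.
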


%
%

\medskip

\noindent 
We remark that $\frac{3}{\pi}$ is the \emph{maximal cusp density} for a finite volume
two-dimensional hyperbolic orbifold~\cite{Adams:2002}, equivalently it is the maximal density of a
horoball packing in the hyperbolic plane $\h^2$.  We prove 
Theorems~~\ref{theorem:main:circle} and \ref{theorem:main:horo} 
 in \S\ref{sec:gen} below,
after discussing their proofs for a crucial example in~\S\ref{sec:farey:ford}.

\begin{figure} [tbh]
\hfill
\begin{minipage}[c]{0.48\textwidth}
   \includegraphics[width=0.99\textwidth]{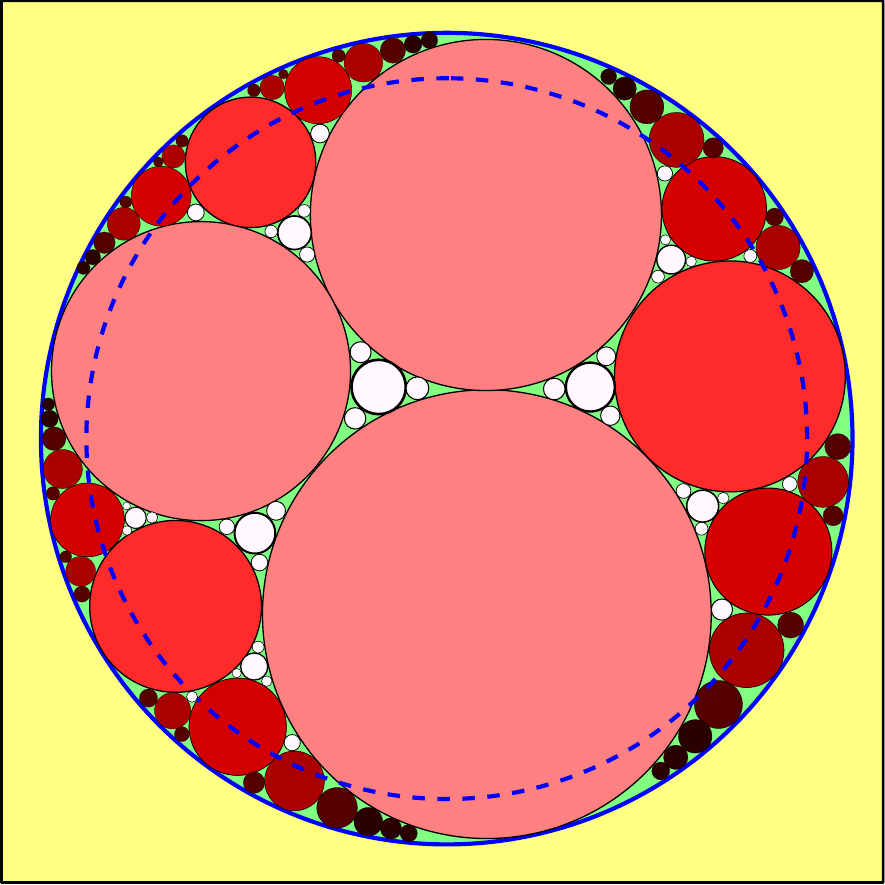}
\captionsetup{width=0.98\textwidth}
   \caption{\small A portion of a bounded packing, where $C_0$ is the outer circle.
The circles of ${\mathcal P}_0$  are in various shades of \textcolor{red}{red}, 
$C_0$ is in \textcolor{blue}{blue} and 
$C_\epsilon$ is in dashed \textcolor{blue}{blue}.}
   \label{fig:config1}
\end{minipage}
\quad
\begin{minipage}[c]{0.48\textwidth}
   \includegraphics[angle=-90,width=0.99\textwidth]{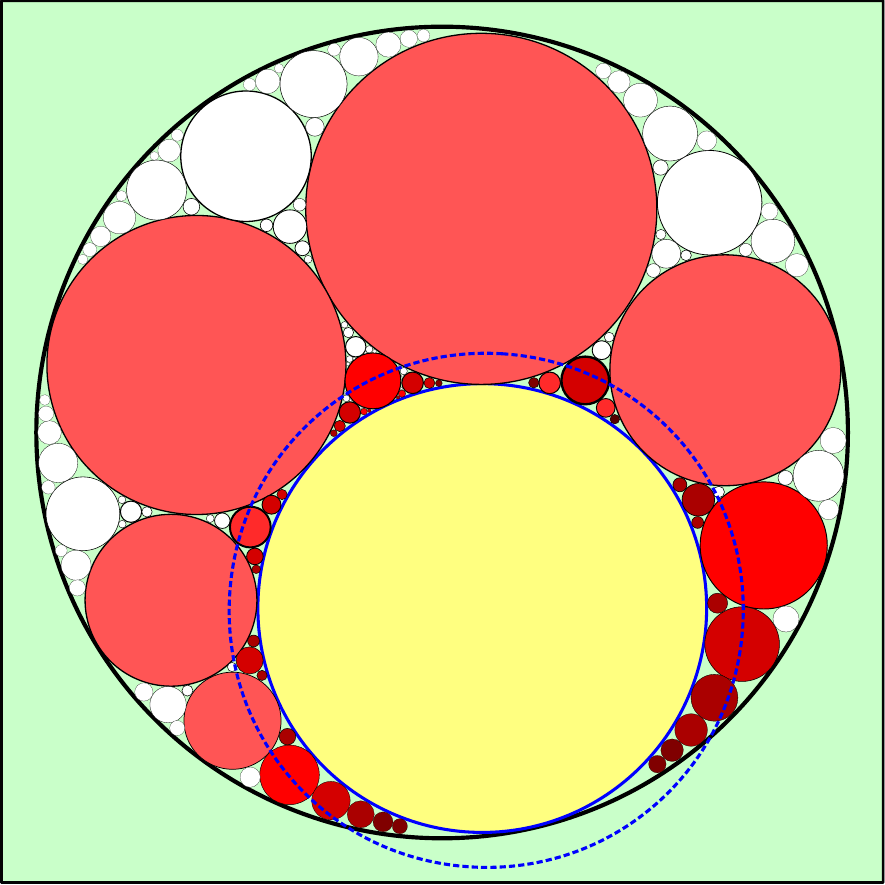}
\captionsetup{width=0.98\textwidth}
   \caption{\small A portion of a bounded packing, where $C_0$ is an inner circle.
The circles of ${\mathcal P}_0$  are in various shades of \textcolor{red}{red}, 
$C_0$ is in \textcolor{blue}{blue} and 
$C_\epsilon$ is in dashed \textcolor{blue}{blue}.
}
   \label{fig:config2}
\end{minipage}
\hspace*{\fill}
 \end{figure}

\subsection{Soddy Sphere Packings}\label{sec:soddy}
In three dimensions, given any $4$ mutually tangent spheres $S_1, S_2, S_3, S_4 \in \R^3 \cup
\infty$, there are two mutually tangent spheres, $S_0$ and $S_5$. Again, by selecting any $4$ of
$S_0, \ldots, S_5$, one obtains new spheres, and iterating this procedure, we obtain a 
\emph{Soddy Sphere packing}, or SSP. As above, consider a SSP 
\begin{equation*}
     \p = \bigcup S_i \subset \hat{\C},
\end{equation*}
where 
$S_i = \partial B(x_i, r_i), x_i \in \R^3, r_i >0$ or $S_i$ is given by a plane. 
We write $S_i \sim S_j$
if the spheres are tangent. Given $S_0 = \partial B(x_0, r_0)$ in $\p$, let 
\begin{equation*}
     \p_0 = \bigcup_{S_i \in \p,\, S \sim S_0} B_i
\end{equation*}
denote the set of balls $B_i = B(x_i, p_i)$ tangent to $S_0$. Take the
concentric sphere $S_{\epsilon} = \partial B(x_0, r_0 + \epsilon)$ (in the case of a bounded
packing, if $S_0$ is the outer sphere, we set $S_{\epsilon} = \partial B(x_0, r_0 - \epsilon)$; and
if $S_0$ is a plane, we let $C_{\epsilon}$ be the parallel plane on the `inside' of the packing. Let
$\nu_{\epsilon}$ denote the Lebesgue probability measure on $S_{\epsilon}$ (if $S_{\epsilon} \cong
\R^2$, we take any subset of the packing sitting over a fixed area subset of $\R^2$). We have:

\begin{Theorem}\label{theorem:main:sphere} 
Fix notation as above. Then 
\begin{equation*}
     \lim_{\epsilon \rightarrow 0} \nu_{\epsilon} (\p_0) = \frac{\sqrt{3}}{2V_T}=0.853\dots,
\end{equation*}
 where $V_T$ denotes the volume of an ideal hyperbolic tetrahedron with dihedral angles $\pi/3$.

\end{Theorem}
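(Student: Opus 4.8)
The plan is to mirror the argument for the circle case (Theorems~\ref{theorem:main:circle} and \ref{theorem:main:horo}), replacing the modular surface $\h^2/SL(2,\Z)$ by a finite-volume hyperbolic $3$-orbifold and closed horocycles by closed horospheres. First I would apply a M\"obius transformation of $\hat{\C} = \partial\h^3$, acting as an isometry of the upper half-space model $\h^3 = \{(x,y,t):t>0\}$, sending $S_0$ to the plane $\{t=0\}$ and carrying the tangency point of $S_0$ with a chosen neighbor $S_1$ to $\infty$. Under this normalization each sphere $S_i\sim S_0$ becomes a Euclidean ball tangent to $\{t=0\}$, i.e.\ a horoball based at a point of $\R^2=\partial\h^3\setminus\{\infty\}$. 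The cleanest statement to establish first is the horospherical one, in which $S_\epsilon$ is replaced by the genuine horosphere $\{t=\epsilon\}$; the stated concentric-sphere version is then recovered afterward, since the normalizing map is conformal and hence near each base point the concentric sphere $S_\epsilon$ agrees with a horosphere (at a height scaled by the local conformal factor) to leading order in $\epsilon$. In all cases $\nu_\epsilon(\p_0)$ is, up to $o(1)$, the proportion of the horizontal plane covered by the horoballs.

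The key structural input, and the step I expect to be the main obstacle, is the identification of the horoball configuration. The Soddy (Descartes) configuration of four mutually tangent spheres is rigid under M\"obius transformations: a parameter count ($16$ sphere parameters minus $6$ tangency conditions, modulo the $10$-dimensional group $\mathrm{O}^+(4,1)$) shows the moduli space is a point, so every such configuration is M\"obius-equivalent to the regular one. Consequently the four pairwise tangent horoballs attached to $S_0$ and three of its neighbors are the ideal vertices of a \emph{regular} ideal tetrahedron (all dihedral angles $\pi/3$), whose cusp cross-sections are equilateral triangles. Iterating the Soddy construction, the base points of the horoballs tangent to $S_0$ form a single orbit of $\infty$ under an arithmetic Kleinian group $\Gamma$ commensurable with the symmetry group of the tessellation of $\h^3$ by regular ideal tetrahedra (equivalently with the Bianchi group $PSL(2,\Z[\om])$, $\om=e^{2\pi i/3}$), and the tangent horoballs descend to the maximal cusp horoball of the finite-volume orbifold $\h^3/\Gamma$. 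The work here is to verify that the local combinatorics of tangencies produces exactly this $\Gamma$-invariant (extremal) horoball packing, and that, by the rigidity above, the resulting picture is universal over all SSPs and all choices of $S_0$.

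With this in place I would invoke equidistribution of expanding horospheres: as $\epsilon\to0$ the horosphere $\{t=\epsilon\}$ projects to a closed horosphere in $\h^3/\Gamma$ whose normalized Lebesgue measure equidistributes toward the normalized hyperbolic volume measure on the orbifold. This is the three-dimensional analogue of Sarnak's equidistribution of long closed horocycles and follows from mixing of the geodesic flow on the frame bundle $\Gamma\backslash\mathrm{PSL}(2,\C)$, which is applicable since $\Gamma$ is a lattice. Testing the equidistributing horosphere against the indicator of the horoball packing then gives
\begin{equation*}
     \lim_{\epsilon\to0}\nu_\epsilon(\p_0)=\frac{\vol\bigl(\text{maximal cusp horoball in }\h^3/\Gamma\bigr)}{\vol\bigl(\h^3/\Gamma\bigr)},
\end{equation*}
the density of the extremal horoball packing of $\h^3$.

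Finally I would evaluate this ratio. Normalizing the cusp so that the maximal horoball is $\{t\ge1\}$, it meets a cusp fundamental domain $R\times[1,\infty)$ in hyperbolic volume $\int_R\int_1^\infty t^{-3}\,dt\,dA=\tfrac12\,\mathrm{area}(R)$, while the orbifold volume is assembled from copies of the regular ideal tetrahedron of volume $V_T$; carrying out this bookkeeping—equivalently, citing the B\"or\"oczky--Florian density bound for horoball packings of $\h^3$, which is attained precisely by the regular ideal simplex configuration—yields $\frac{\sqrt3}{2V_T}=0.853\dots$. The transfer from the horospherical statement to the concentric-sphere statement of Theorem~\ref{theorem:main:sphere} is then completed by the conformal comparison noted above, exactly as in the circle case.
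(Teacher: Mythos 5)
Your skeleton for identifying the limiting constant matches the paper's: M\"obius-normalize so that $S_0$ becomes the plane $\{t=0\}$ and its neighbors become the extremal horoball packing invariant under a lattice whose quotient is two regular ideal tetrahedra with dihedral angles $\pi/3$, then read off the cusp density $\frac{\sqrt 3}{2V_T}$. (The paper does not argue by rigidity of the Descartes configuration; it pins the lattice down as Kontorovich's Soddy group $\Gamma_3\subset SL_2(\mathcal O_3)$ and cites his explicit fundamental domain, but your volume bookkeeping is the same computation. You are right that identifying the tangency combinatorics with this specific $\Gamma$-orbit is the step needing verification, and you leave it as such.)

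The genuine gap is in your transfer from the horospherical statement to the concentric-sphere statement, which is the actual content of Theorem~\ref{theorem:main:sphere}. Under the normalizing map $M$, the image $M(S_\epsilon)$ is a round sphere of Euclidean radius $R_\epsilon\to\infty$, not a horosphere, and the measure $M_*\nu_\epsilon$ lives on that sphere. Over a patch of fixed diameter $d$ away from the pole of $M$, this sphere is squeezed between the horospheres $\{t=c_0\epsilon(1-O(d))\}$ and $\{t=c_0\epsilon(1+O(d))\}$ (the $O(d)$ coming from the variation of the conformal factor and from the sphere's curvature $1/R_\epsilon\asymp\epsilon$): the deviation from any single horosphere is proportional to the height itself and does \emph{not} vanish as $\epsilon\to 0$, so ``agrees with a horosphere to leading order'' is not true in the uniformity you need. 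To repair this along your route you would need (i) equidistribution of fixed \emph{pieces} of expanding closed horospheres against absolutely continuous densities --- the full-horosphere statement does not localize for free; this is exactly what Shah's theorem (Theorem~\ref{theorem:shah}) or Str\"ombergsson's shrinking-interval result is needed for in the paper --- and (ii) a sandwiching argument between the two bounding heights together with an estimate showing that horoballs of diameter comparable to $\epsilon$ (where the sandwich can fail) contribute $O(d)$ to the density. None of this appears in your writeup, and your appeal to ``exactly as in the circle case'' does not help: the paper proves Theorem~\ref{theorem:main:circle} not by comparison with Theorem~\ref{theorem:main:horo} but by the modified Eskin--McMullen theorem (Theorem~\ref{theorem:eskinmcmullen}) on expanding translates of $K$-orbits with moving basepoints, which handles the large round sphere directly; its proof of Theorem~\ref{theorem:main:sphere} applies the same theorem with $G=SO(3,1)$. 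Substituting that result for your transfer step closes the gap.
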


\medskip

\noindent 
We also consider another family of spheres approximating $S_0$. Let $y_0$ denote the point of
tangency between $S_0$ and $S_1$, and for $0< \epsilon < 1$, let $\cc_{\epsilon}$ be the sphere containing $S_0$
through $y_0$, tangent to $S_1$, and of radius $r_0 + \epsilon$. Let $\tilde{\nu}_{\epsilon}$ denote
the Lebesgue probability measure on $\cc_{\epsilon}$. Then we have:

\begin{Theorem}\label{theorem:main:horosphere} 
Fix notation as above. Then 
\begin{equation*}
     \lim_{\epsilon \rightarrow 0} \tilde{\nu}_{\epsilon} (\p_0) = \frac{\sqrt{3}}{2V_T}.
\end{equation*}

\end{Theorem}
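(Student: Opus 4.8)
The plan is to follow the template already used for Theorem~\ref{theorem:main:sphere}, taking advantage of the fact that the family $\cc_\epsilon$ is engineered to become an \emph{exact} horosphere after normalization; this is the three-dimensional counterpart of how Theorem~\ref{theorem:main:horo} is deduced from Theorem~\ref{theorem:main:circle} in the planar case. First I would fix a M\"obius transformation $M$ of $\hat\R^3$ carrying the common tangency point $y_0$ of $S_0$ and $S_1$ to $\infty$. Because $S_0$, $S_1$ and every $\cc_\epsilon$ pass through $y_0$ and are mutually tangent there, their images $P_0 := M(S_0)$, $P_1 := M(S_1)$ and $P_{h(\epsilon)} := M(\cc_\epsilon)$ are mutually parallel planes, with Euclidean height $h(\epsilon)\to 0$ as $\epsilon\to 0$. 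Viewing the upper half-space over $P_0$ as $\h^3$ with $\partial\h^3 = P_0 \cong \C$, the planes $P_{h(\epsilon)}$ are precisely horospheres based at $\infty$, and each sphere $S_i$ tangent to $S_0$ maps to a sphere tangent to $P_0$, i.e. to a horoball in $\h^3$ based at a point of $\C$.

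Next I would record the group theory. The stabilizer $\Gamma$ of $S_0$ inside the symmetry group of the Soddy packing acts on $\h^3$ as a finite-covolume Kleinian group $\Gamma < PSL(2,\C)$, under which the horoballs above form a $\Gamma$-invariant packing by mutually tangent (hence maximal) horoballs; their union descends to the union of maximal cusp neighborhoods in the orbifold $\h^3/\Gamma$. Since $M$ is conformal and $\cc_\epsilon$ maps to the genuine horosphere $P_{h(\epsilon)}$, the quantity $\tilde{\nu}_\epsilon(\p_0)$ agrees, up to an error vanishing with $\epsilon$, with the normalized horospherical measure of $P_{h(\epsilon)}$ lying inside the horoballs. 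I would then push $P_{h(\epsilon)}$ down to the closed horosphere (a flat $2$-orbifold, the quotient by the cusp stabilizer $\Gamma_\infty$) at height $h(\epsilon)$ in $\h^3/\Gamma$. As $h(\epsilon)\to 0$ this expanding closed horosphere equidistributes with respect to normalized volume --- the three-dimensional analog of closed-horocycle equidistribution on the modular surface, which follows from mixing of the geodesic flow for the lattice $\Gamma$ --- so the covered fraction converges to $\vol(\text{cusp neighborhoods})/\vol(\h^3/\Gamma)$.

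Finally, this ratio equals the density of the extremal horoball packing of $\h^3$, computed by decomposing a fundamental domain into regular ideal tetrahedra of volume $V_T$ and summing the horoball caps at their ideal vertices; this yields $\tfrac{\sqrt3}{2V_T}$, the same constant already identified in Theorem~\ref{theorem:main:sphere}, so no new volume calculation is needed. The main obstacle is the passage from the averaged equidistribution statement on the \emph{non-compact} orbifold $\h^3/\Gamma$ to a usable conclusion about $\tilde{\nu}_\epsilon$: one must control the cusp (ruling out escape of mass of the expanding horosphere), and, because the round Lebesgue measure $\tilde{\nu}_\epsilon$ on $\cc_\epsilon$ pushes forward to a conformally reweighted measure on $P_{h(\epsilon)}$ rather than to flat measure, one must upgrade equidistribution to the local statement that the covered fraction tends to $\tfrac{\sqrt3}{2V_T}$ uniformly over the base $S_0$. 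Since the limiting local density is then the same constant everywhere, integrating against $\tilde{\nu}_\epsilon$ produces the same value and the conformal distortion is washed out.
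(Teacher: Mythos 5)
Your proposal is correct in outline and follows the same overall strategy as the paper: a M\"obius normalization sending the tangency point $y_0$ to $\infty$ turns $S_0$ into a boundary plane, the tangent spheres into a $\Gamma$-invariant horoball packing, and $\cc_\epsilon$ into a genuine horosphere at Euclidean height $h(\epsilon)\to 0$; equidistribution of the expanding closed horosphere in the finite-volume orbifold then yields the cusp density $\tfrac{\sqrt3}{2V_T}$. The paper is more concrete about the group theory (the normalized packing is the explicit base packing between $x_3=0$ and $x_3=1$ with stabilizer Kontorovich's Soddy group $\Gamma_3$, whose fundamental domain is two regular ideal tetrahedra --- this is where the constant actually comes from), but the substantive difference is how you treat what you correctly identify as the main obstacle: the push-forward $M_*\tilde\nu_\epsilon$ is an absolutely continuous probability measure on the plane $P_{h(\epsilon)}$, not the (non-normalizable) flat horospherical measure. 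The paper disposes of this in one step by citing the three-dimensional case of Shah's equidistribution theorem \cite[Theorem 1.4]{Shah:1996} (the analogue of Theorem~\ref{theorem:shah}), which is stated precisely for absolutely continuous measures on expanding horospheres; you instead propose to derive plain horospherical equidistribution from mixing and then upgrade it to a locally uniform statement so that the conformal reweighting ``washes out.'' That route can be made to work --- it amounts to reproving the needed special case of Shah's theorem, and you would still need a tightness/no-escape-of-mass argument for the measures $M_*\tilde\nu_\epsilon$ on the non-compact horosphere --- but it is considerably more labor than the citation the paper uses, and the uniformity you invoke is exactly the nontrivial content that Shah's theorem packages for you.
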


We remark that $\frac{\sqrt{3}}{2V_T}$ is the \emph{maximal cusp density} 
for a finite volume three-dimensional hyperbolic orbifold~\cite{Adams:2002}, equivalently it is the
maximal density of a horoball packing in the hyperbolic plane $\h^3$. We prove
Theorems~\ref{theorem:main:sphere}  and \ref{theorem:main:horosphere}
in~\S\ref{sec:sphere}.

\section{The Farey-Ford Packing}\label{sec:farey:ford} 
\noindent In this section, we consider the example of the \emph{Farey-Ford Packing}, given
by the horizontal lines $C_0 =\R$, and $C_1 = i + \R$, together with the circles $C_{0, \frac p q},
C_{1, \frac p q}$, for $\frac p q \in \Q$, where $C_{\delta , \frac p q}$ is the circle based at
$\frac p q + i \delta$ of diameter $\frac{1}{q^2}$, for $\delta = 0, 1$. 
The circles $C_{0, \frac p q}$ are known as \emph{Ford circles}, and
two circles $C_{0, \frac p q}$ and $C_{0, \frac r s}$ are tangent if and only if $\frac p q$ and
$\frac r s$ are neighbors in some Farey sequence. 
The packing formed by the Ford circles is illustrated in Figure~\ref{fig:config3}. 

Since this packing is invariant under the
translation $z \mapsto z+1$, we focus on just one period, namely that located between the vertical
lines $x =0$ and $x=1$. 
We set $C_0 = [0, 1]$, $C_{\epsilon} = [0, 1] + i \epsilon$. The collection $\p_0$ of circles
tangent to $C_0$ is given by 
\begin{equation*}
     \F_0 = \displaystyle\bigcup_{\frac p q \in \Q \cap [0, 1]} C_{0, \frac p q},
\end{equation*}
that is, the collection of \emph{Ford Circles} based in $[0, 1)$, and $C_{\epsilon}$ is the
segment $[0,1] + \epsilon i$. Thus, the proportion of $C_{\epsilon}$ contained in $\p_0$ is given
by 
\begin{equation}\label{eq:equi:modular} 
\int_{0}^1 \chi_{\F_0}(x+\epsilon i) \,dx = \sum_{\frac p q : \, q^{-2} > \epsilon} 2\sqrt{\epsilon(
q^{-2} - \epsilon)}\,,
\end{equation}
where each term in the summation is the length of the intersection of the segment 
$C_{\epsilon}$ with the circle $C_{0, \frac p q}$. Below, we give two proofs, one number theoretic
(\S\ref{sec:number}) and one geometric (\S\ref{sec:modular}), that this integral/sum tends to
$\frac{3}{\pi}$ as $\epsilon \rightarrow 0$. 
Moreover, the results  stated in Theorem~\ref{Theorem2} and Theorem~\ref{strom} below further show
that this is an intrinsic property of the packing, because the same limit is attained when $C_0$
and $C_{\epsilon}$ are replaced by any fixed corresponding short intervals.

%
%
%
%
%
%
%
%
%
%

\begin{figure}[htb]
   \centering
   \includegraphics[angle=-90, width=0.87\textwidth]{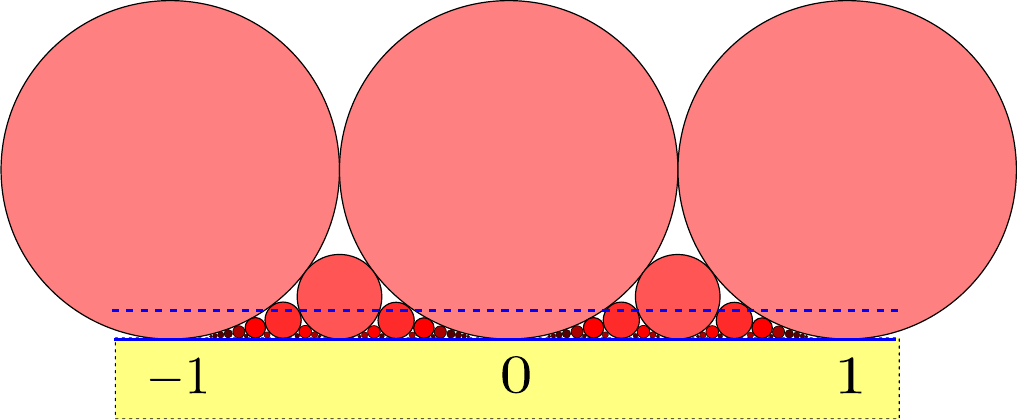}
   \caption{\small An example of a periodic packing is the Farey-Ford Packing $\mathcal F$.
Here, if we take $C_0$ to be the segment $[0,1]$, $\F_0$ consists of Ford Circles, 
 based at $p/q \in [0, 1]$,   diameter $1/q^2$, and $C_{\epsilon}$ is the segment 
 $[0, 1] + \epsilon i$.
}
   \label{fig:config3}
\end{figure}

\subsection{Number Theory}\label{sec:number}
For simplicity of notation, when $\p_0$ is  the Farey-Ford circles and  $C_0$ is the segment
$[0,1]$, we write $L(\epsilon)=\mu_{\epsilon}(\p_0)$.
Then $L(\epsilon)$ is the probability to be in a Ford circle above $C_0$ at height
$\epsilon$.
Our problem
is to understand the behavior of $L(\eps)$ as  $\eps$ decreases from $1/2$ to $0$. 
The probability $L(\eps)$ is an oscillatory function: thus $L(1/2)=1$,
then it decreases till $L(1/4)=0.86602\dots$, than it increases to $L(1/5)=1$, then it
decreases to  $0.87839\dots$, and so on. 
It attains the maximum value $1$ only twice, for $\eps=1/2$ and its  minimum absolute
value is $0.87839\dots$ (see Figures~\ref{fig:Ldeh005} and \ref{fig:Ldeh00002}).
The following theorem shows that there is a limit of $L(\eps)$ as $\eps$ descends to  $0$, and we
also find a bound for the deviation from this limit.

\begin{Theorem}\label{Theorem1}
  For $0<\eps\le 1/2$, we have
\begin{equation}\label{eq3pepiT1}
   L(\eps)=\frac{3}{\pi}+O\left(\sqrt{\eps}\, |\log \eps|\right).
\end{equation}

\end{Theorem}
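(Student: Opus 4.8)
The plan is to evaluate the sum on the right-hand side of \eqref{eq:equi:modular} by grouping the Ford circles according to their denominator and invoking a Mertens-type estimate for $\sum \varphi(q)/q$. First I would rewrite $L(\eps)$ explicitly. Since $C_{0,p/q}$ has diameter $q^{-2}$, it meets the line $C_\eps$ precisely when $q^{-2} > \eps$, i.e. $q < \eps^{-1/2}$; and for each admissible denominator $q$ there are exactly $\varphi(q)$ reduced fractions $p/q \in [0,1)$ (including $q=1$, since $\varphi(1)=1$). Because the Ford disks have disjoint interiors, their chords with $C_\eps$ do not overlap, so \eqref{eq:equi:modular} is a genuine length. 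Writing $N = \eps^{-1/2}$ and factoring out the common scale, it becomes
\begin{equation*}
  L(\eps) = \frac{2}{N}\sum_{q \le N}\frac{\varphi(q)}{q}\sqrt{1 - \frac{q^2}{N^2}},
\end{equation*}
where the top term (if $N \in \N$) vanishes. The problem is thus reduced to the asymptotics of a weighted sum of $\varphi(q)/q$.

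Next I would insert the elementary estimate
\begin{equation*}
  A(t) := \sum_{q \le t}\frac{\varphi(q)}{q} = \frac{6}{\pi^2}\,t + O(\log t),
\end{equation*}
which follows from $\varphi(q)/q = \sum_{d\mid q}\mu(d)/d$ together with $\sum_{d\ge 1}\mu(d)/d^2 = 1/\zeta(2) = 6/\pi^2$, after exchanging the order of summation and replacing $\lfloor t/d\rfloor$ by $t/d + O(1)$. Applying Abel (Riemann--Stieltjes) summation with weight $g(t) = \sqrt{1 - t^2/N^2}$ gives
\begin{equation*}
  \sum_{q \le N}\frac{\varphi(q)}{q}\,g(q) = \int_{1}^{N} A(t)\,\frac{t}{N^2\sqrt{1 - t^2/N^2}}\,dt,
\end{equation*}
the boundary terms vanishing because $g(N) = 0$ and $A(t) = 0$ for $t < 1$. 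Substituting $A(t) = \tfrac{6}{\pi^2}t + E(t)$ with $E(t) = O(\log t)$ and rescaling $t = Nu$, the main term becomes $\tfrac{6N}{\pi^2}\int_{1/N}^{1} u^2(1-u^2)^{-1/2}\,du$. Using $\int_0^1 u^2(1-u^2)^{-1/2}\,du = \pi/4$ and $\int_0^{1/N}u^2(1-u^2)^{-1/2}\,du = O(N^{-3})$, this equals $\tfrac{3N}{2\pi} + O(N^{-2})$, so that after multiplying by $2/N$ the main term contributes exactly $3/\pi$.

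It remains to bound the contribution of $E(t)$, and this is the step I expect to require the most care: the weight $t/(N^2\sqrt{1-t^2/N^2})$ has a singularity at $t = N$, so one must check, uniformly in $\eps$, that $\int_1^N |E(t)|\,\frac{t}{N^2\sqrt{1-t^2/N^2}}\,dt \ll \log N$. Using $|E(t)| \ll \log N$ for all $t \le N$ together with the exact evaluation $\int_1^N \frac{t}{N^2\sqrt{1-t^2/N^2}}\,dt = \sqrt{1 - N^{-2}} = O(1)$ (the singularity being only of order $(N-t)^{-1/2}$, hence integrable) handles this, contributing $O(\log N)$ to the weighted sum and therefore $O((\log N)/N)$ to $L(\eps)$.

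Finally, since $N = \eps^{-1/2}$ gives $(\log N)/N = \tfrac12\sqrt{\eps}\,|\log\eps|$, collecting the main term and the error yields $L(\eps) = 3/\pi + O(\sqrt{\eps}\,|\log\eps|)$, as claimed. The whole argument is essentially self-contained modulo the totient estimate; the only genuinely delicate point is the uniform control of the error integral near the endpoint $t=N$, which I would isolate either by the explicit weight computation above or, alternatively, by splitting the range at $t = N/2$ and estimating the near-endpoint terms individually.
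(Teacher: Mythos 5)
Your argument is correct and reaches the same error term $O(\sqrt{\eps}\,|\log \eps|)$ as the paper, but the evaluation of the weighted totient sum takes a genuinely different route. The paper substitutes $\varphi(q)=\sum_{d\mid q}\mu(d)\frac{q}{d}$ directly into $L(\eps)=\frac{2}{Q}\sum_{q\le Q}\varphi(q)\sqrt{q^{-2}-Q^{-2}}+O(\frac{\log Q}{Q})$ with $Q=\lfloor \eps^{-1/2}\rfloor$, interchanges the order of summation, splits the range of $d$ at an auxiliary parameter $D$, compares each inner sum over $m$ with the integral $\int_0^{Q/d}\sqrt{1-d^2x^2/Q^2}\,dx=\frac{\pi Q}{4d}$ using monotonicity, and finally balances the errors $O(1/D)$ and $O(\log D/Q)$ by choosing $D=Q/\log Q$. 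You instead front-load all the arithmetic into the single Mertens-type estimate $\sum_{q\le t}\varphi(q)/q=\frac{6}{\pi^2}t+O(\log t)$ and handle the weight $\sqrt{1-q^2/N^2}$ by Abel summation, so the constant $\pi/4$ enters through $\int_0^1 u^2(1-u^2)^{-1/2}\,du$ rather than $\int_0^1\sqrt{1-t^2}\,dt$ (the two being related by precisely the integration by parts that partial summation encodes). Your version buys a cleaner separation of the arithmetic input from the analysis and dispenses with the splitting parameter; its one delicate point, which you correctly isolate and resolve, is the uniform control of the error integral near the endpoint $t=N$, settled by the exact evaluation $\int_1^N \frac{t}{N^2\sqrt{1-t^2/N^2}}\,dt=\sqrt{1-N^{-2}}=O(1)$. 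The paper's version avoids that endpoint singularity entirely (each inner sum is compared with a well-behaved integral) at the cost of the optimization over $D$. Both proofs ultimately rest on the same two inputs, $1/\zeta(2)=6/\pi^2$ and the quarter-circle integral $\pi/4$, and both yield the stated bound.
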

\begin{proof}
The path in the definition of $L(\eps)$ is a horizontal line situated above the $x$-axis at height
$\eps$ intersects the the Ford circle $C$ of center $(a/q, 1/2q^2 )$ and radius $r=1/2q^2$ if and
only if $\eps\le 1/ q^2$. The length
of the path inside $C$ equals $2\sqrt{r^2 - (\eps-r)^2}$
in both cases $\eps\in(0,r]$ and $\eps\in(r,2r]$, respectively.
Also, because the half paths in the boundary circles (those tangent at $a/q=0$ and $1$) add, we have
\begin{equation}\label{eqLh}
	L(\eps) = \sum_{\substack{1\le a\le q\\ \gcd(a,q)=1 \\ \eps\le2r=1/q^2}} 2\sqrt{r^2 -
(\eps-r)^2} \,.
\end{equation}
The sums can be written as
\begin{equation*}
L(\eps) = \sum_{1\le q\le1/\sqrt{\eps}}\sum_{\substack{1\le a\le q\\(a,q) = 1}}
2\sqrt{2\eps r-\eps^2}
= 2\sqrt{\eps} \sum_{1\le q\le1/\sqrt{\eps }}\varphi(q)\sqrt{\frac1{q^2} - \eps } \,.
\end{equation*}
Put $Q = \lfloor 1/\sqrt{\eps}\rfloor$. Therefore $\sqrt{\eps} = 
\frac1{Q}\left(1+O\Big(\frac1{Q}\Big)\right)$ and
$\eps = \frac1{Q^2}+O\big(\frac1{Q^3}\big)$,
so $L(\eps)$ can be written as
\begin{equation*}
  L(\eps)  = \frac2{Q}\sum_{1\le q\le Q}\varphi(q)\sqrt{\frac1{q^2} - \frac1{Q^2}}
+  O\left(\frac{\log Q}{Q}\right) \,. 
\end{equation*}
Next, using the identity
$\varphi(q) = \sum_{d\mid q}\mu(d)\frac{q}{d}$, we find that
\begin{equation*}
   L(\eps) = \frac2{Q}\sum_{1\le q\le Q}\sum_{d\mid q}\frac{\mu(d)}{d}
\sqrt{1 - \frac{q^2}{Q^2}}
+ O\left(\frac{\log Q}{Q}\right) \,.
\end{equation*}
Here we interchange the order of summation and then break the
sum over $d$ in two sums, according as to whether $d\le D$
or $d> D$, where $D$ is a parameter whose precise value will be chosen later. Thus
\begin{equation}\label{eqS1S2}
   L(\eps)  = S_1(\eps) + S_2(\eps) +  O\left(\frac{\log Q}{Q}\right) \,,
\end{equation}
where
\begin{equation*}
S_1(\eps) = \frac2{Q}\sum_{1\le d\le D} \frac{\mu(d)}{d}
\sum_{1\le m\le \lfloor\frac{Q}{d}\rfloor}
\sqrt{1 - \frac{d^2m^2}{Q^2}}\,,
\end{equation*}
and
\begin{equation*}
S_2(\eps) = \frac2{Q}\sum_{D < d\le Q} \frac{\mu(d)}{d}
\sum_{1\le m\le \lfloor\frac{Q}{d}\rfloor}
\sqrt{1 - \frac{d^2m^2}{Q^2}}\,.
\end{equation*}
The second sum is bounded by
\begin{equation}\label{eqS2}
   |S_2(\eps)| \ll \frac1{Q}\sum_{D < d\le Q} \frac{Q}{d^2} = O\left(\frac1{L}\right).
\end{equation}

In the first sum $d$ is small, $\lfloor\frac{Q}{d}\rfloor$ is large, and the inner
sum is close to the integral $\int_0^{Q/d}\sqrt{1-\frac{d^2x^2}{Q^2}}\, dx$.
Precisely, since the function $x\mapsto\sqrt{1-\frac{d^2x^2}{Q^2}}$ is decreasing on
the interval $[0, \frac{Q}{d}]$, it follows that
\begin{equation*}
\sum_{m=0}^{\lfloor\frac{Q}{d}\rfloor}\sqrt{1 - \frac{d^2m^2}{Q^2}} >
\int_0^{Q/d}\sqrt{1-\frac{d^2x^2}{Q^2}}\, dx >
\sum_{m=1}^{\lfloor\frac{Q}{d}\rfloor}\sqrt{1 - \frac{d^2m^2}{Q^2}}\,.
\end{equation*}
Therefore,
\begin{equation*}
S_1(\eps) = \frac2{Q}\sum_{1\le d\le D} \frac{\mu(d)}{d}
\left(\int_0^{Q/d}\sqrt{1-\frac{d^2x^2}{Q^2}}\, dx + O(1) \right).
\end{equation*}
Next, by a change of variable, 
\begin{equation*}
\int_0^{Q/d}\sqrt{1-\frac{d^2x^2}{Q^2}}\, dx
=\frac Qd \int_0^1\sqrt{1-t^2}\, dt\,.
\end{equation*}
One has $ \int_0^1\sqrt{1-t^2}\, dt = \frac{\pi}{4}$\,, and so
\begin{equation*}
S_1(\eps) = \frac2{Q}\sum_{1\le d\le D} \frac{\mu(d)}{d}
\left(\frac{\pi Q}{4d}+ O(1) \right)
= \frac{\pi}{2}\sum_{1\le d\le D} \frac{\mu(d)}{d^2}
+ O\left(\frac{\log D}{Q}\right).
\end{equation*}
Since
\begin{equation*}
\sum_{1\le d\le D} \frac{\mu(d)}{d^2} = \sum_{d=1}^{\infty} \frac{\mu(d)}{d^2} + 
 O\left(\frac{1}{D}\right)
 = \frac6{\pi^2} + O\left(\frac{1}{D}\right),
\end{equation*}
it follows that
\begin{equation}\label{eqS1}
   S_1(\eps) = \frac3{\pi} + O\left(\frac{1}{D}\right) + O\left(\frac{\log D}{Q}\right).
\end{equation}

Combining the estimates \eqref{eqS2} and \eqref{eqS1} and balancing the error terms by taking
$D=Q/\log Q$, relation \eqref{eqS1S2} becomes
\begin{equation}\label{eqS3}
   L(\eps)= \frac3{\pi} +  O\left(\frac{\log Q}{Q}\right),
\end{equation}
and \eqref{eq3pepiT1} follows, which concludes the proof of the theorem.
\end{proof}

\begin{figure} [tbh]
\begin{minipage}[c]{0.49\textwidth}
   \includegraphics[angle=-90,width=0.99\textwidth,totalheight=0.59\textwidth]{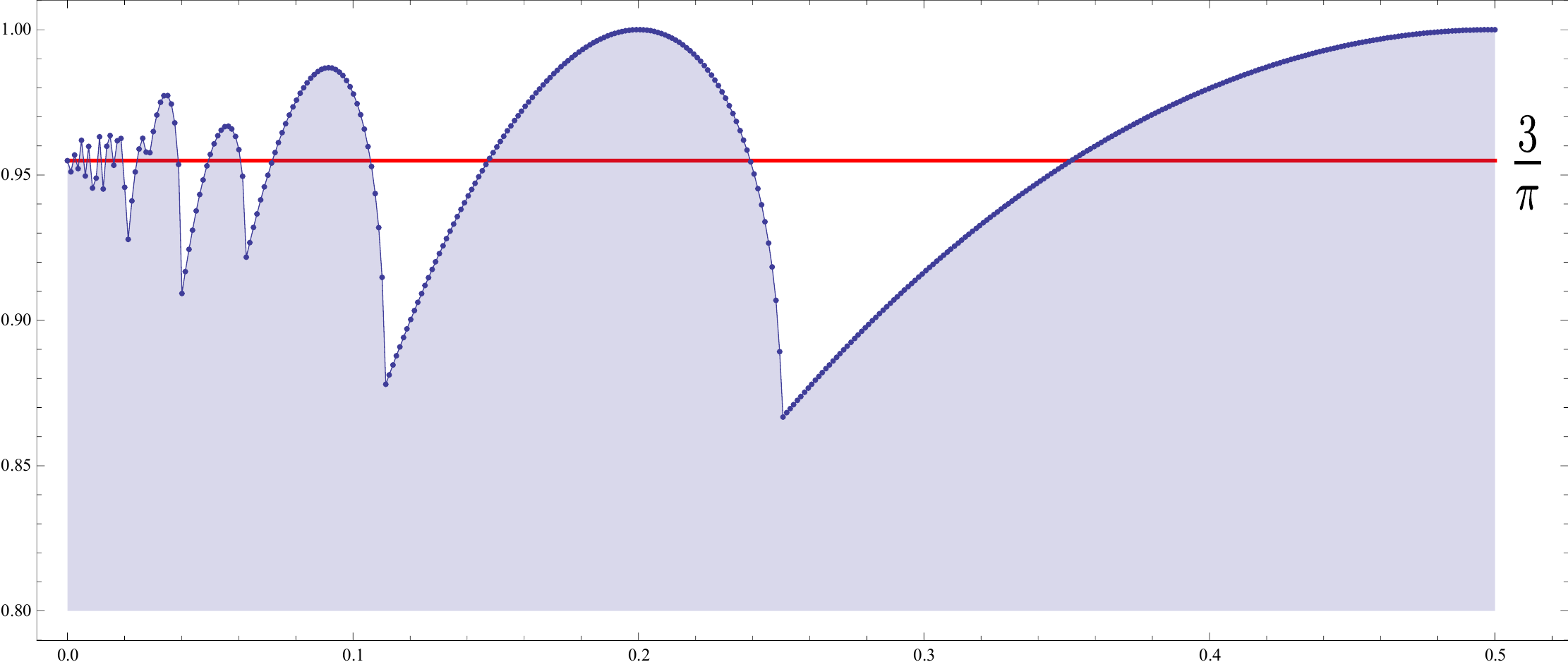}
\captionsetup{width=1.01\textwidth}
   \caption{\small The graph of $L(\eps)$, $0< \eps\le 0.57$.}
   \label{fig:Ldeh005}
\end{minipage}
\ 
\begin{minipage}[c]{0.49\textwidth}
  
\includegraphics[angle=-90,width=0.99\textwidth,totalheight=0.59\textwidth]{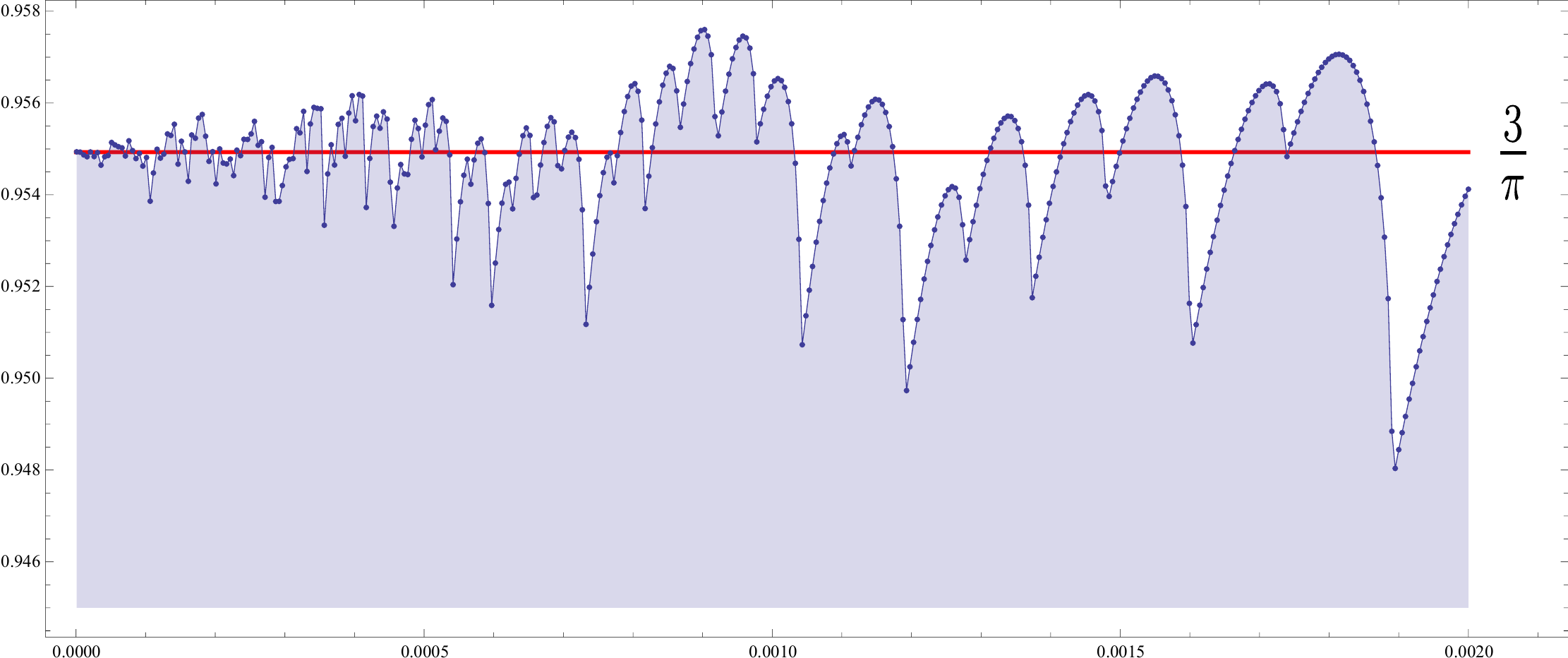}
\captionsetup{width=1.03\textwidth}
   \caption{\small The graph of $L(\eps)$,  $0< \eps\le 0.002$.}
   \label{fig:Ldeh00002}
\end{minipage}
 \end{figure}

As one can see in Figures~\ref{fig:Ldeh005} and \ref{fig:Ldeh00002},
 $L(\eps)$ is a highly oscillatory function, and this behavior accentuates as $\eps\downarrow 0$.
\subsubsection{Short intervals}
Let $\I$ be a subinterval of $[0,1]$. 
Let $L_{\I}(\eps)$ be the probability that a random point at height $\eps$ lying above the interval
$\I$ falls inside of a Ford circle.
In other words, in the general notation $L_{\I}(\eps)=\mu_{\eps}(\p_0(\I))$, where $\p_0(\I)$ is
the collection of Ford circles that are tangent to the $x$-axis at a point from $\I$.
The following theorem shows that in this case also, the limit probability is the distinguished
value $3/\pi$.

\begin{Theorem}\label{Theorem2}
Let $\I$ be a subinterval of $[0,1]$ and let $\delta>0$.
  Then, for $0<\eps\le 1/2$, 
\begin{equation*}\label{eq3pepiT2}
   L_{\I}(\eps)=\frac{3}{\pi}+O_\delta\left(|\I|^{-1}\eps^{1/2-\delta}\right).
\end{equation*}

\end{Theorem}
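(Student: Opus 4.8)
The plan is to reduce Theorem~\ref{Theorem2} to Theorem~\ref{Theorem1} by isolating the only place where the interval $\I$ enters the calculation, namely the count of admissible numerators. The pleasant structural point I expect to exploit is that, once the numerators $a/q\in\I$ are counted with their natural density $|\I|\varphi(q)$, the leading term of $L_{\I}(\eps)$ reassembles \emph{exactly} into the full-interval sum $L(\eps)$ of Theorem~\ref{Theorem1}, so that the only genuinely new work is an error estimate governed by the divisor function.

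First I would write $L_{\I}(\eps)$ as a weighted count of chords. As in the proof of Theorem~\ref{Theorem1}, the Ford circle $C_{0,a/q}$ meets the line at height $\eps$ in a chord of length $2\sqrt{\eps/q^2-\eps^2}$ centered at $a/q$, and this happens precisely when $q\le 1/\sqrt{\eps}$. Summing these lengths over the fractions tangent inside $\I$ and normalizing by $|\I|$ gives
\[
L_{\I}(\eps)=\frac{1}{|\I|}\sum_{1\le q\le Q} N_{\I}(q)\,2\sqrt{\eps/q^2-\eps^2}+E_{\partial}(\eps),
\]
where $Q=\lfloor 1/\sqrt{\eps}\rfloor$, $N_{\I}(q)=\#\{a:\ a/q\in\I,\ \gcd(a,q)=1\}$, and $E_{\partial}$ accounts for the chords straddling the two endpoints of $\I$ (the part of a chord poking outside $\I$, or a chord coming from a circle tangent just outside $\I$). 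Since a chord crosses a fixed endpoint $x_0$ only when $|a-qx_0|<\sqrt{\eps}<1$, at most $O(1)$ fractions straddle each endpoint for each $q\le Q$, each contributing half-width at most $\sqrt{\eps}/q$; summing yields $E_{\partial}(\eps)=O\big(|\I|^{-1}\sqrt{\eps}\,|\log\eps|\big)$, which is dominated by the claimed error.

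Next I would insert the count $N_{\I}(q)$. By Möbius inversion over the divisors of $q$, writing $\I=[\alpha,\beta]$, one has $N_{\I}(q)=\sum_{d\mid q}\mu(d)\big((\beta-\alpha)q/d+O(1)\big)=|\I|\,\varphi(q)+O\big(\sum_{d\mid q}1\big)=|\I|\,\varphi(q)+O_\delta\big(q^\delta\big)$, using the divisor bound $\sum_{d\mid q}1\ll_\delta q^\delta$. Substituting the leading term, the main contribution becomes
\[
\sum_{1\le q\le Q}\varphi(q)\,2\sqrt{\eps/q^2-\eps^2}
=2\sqrt{\eps}\sum_{1\le q\le Q}\varphi(q)\sqrt{\tfrac1{q^2}-\eps}
=L(\eps),
\]
which is exactly the quantity evaluated in Theorem~\ref{Theorem1}, hence equals $\frac3\pi+O(\sqrt{\eps}\,|\log\eps|)$. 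The contribution of the error term in $N_{\I}(q)$ is bounded, using $\sqrt{\eps/q^2-\eps^2}\le \sqrt{\eps}/q$, by
\[
\frac{1}{|\I|}\sum_{1\le q\le Q}O_\delta\big(q^\delta\big)\frac{\sqrt{\eps}}{q}
\ll_\delta \frac{\sqrt{\eps}}{|\I|}\sum_{q\le Q}q^{\delta-1}
\ll_\delta \frac{\sqrt{\eps}}{|\I|}\,Q^{\delta}
\ll_\delta |\I|^{-1}\eps^{1/2-\delta/2}.
\]
Collecting the three pieces and renaming $\delta$ gives $L_{\I}(\eps)=\frac3\pi+O_\delta\big(|\I|^{-1}\eps^{1/2-\delta}\big)$.

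The main obstacle, and the only new input beyond Theorem~\ref{Theorem1}, is the uniform control of coprime numerators in the short interval: I need $N_{\I}(q)=|\I|\varphi(q)+O_\delta(q^\delta)$ with an error that remains summable against the chord weights $\sqrt{\eps}/q$ up to $q=Q$. The divisor bound supplies exactly this. The feature that makes the argument clean is that the leading term of $N_{\I}(q)$ is insensitive to the location and fine structure of $\I$, so the main sum collapses back to the full-interval quantity $L(\eps)$; the sole cost of shrinking $\I$ is the factor $|\I|^{-1}$ in front of the error, reflecting the smaller sample of circles available to average over.
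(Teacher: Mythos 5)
Your argument is essentially the same as the paper's: both isolate the count of coprime numerators $a/q\in\I$ as $|\I|\varphi(q)+O(\tau(q))$, bound the error via the divisor estimate $\tau(q)\ll_\delta q^\delta$ summed against the chord weights $\sqrt{\eps}/q$, and thereby reduce to $L_{\I}(\eps)=L(\eps)+O_\delta(|\I|^{-1}\eps^{1/2-\delta})$, concluding with Theorem~\ref{Theorem1}. The only difference is your explicit treatment of the chords straddling the endpoints of $\I$, a boundary effect the paper's proof leaves implicit; your bound on it is correct and harmless.
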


\begin{proof}

We start as above.                                           
We have
\begin{equation*}
	L_{\I}(\eps)= \frac{S_{\I}(\eps) }{|\I|},
\end{equation*}
where
\begin{equation*}
	S_{\I}(\eps)  = \sum_{\substack{1\le a\le q\\ a/q\in\I \\ \eps\le2r=1/q^2}} 2\sqrt{r^2 -
(\eps-r)^2} .
\end{equation*}
Next,
\begin{equation*}
\begin{split}
   S_{\I}(\eps)  &= \sum_{1\le q\le1/\sqrt{\eps}}\sum_{\substack{a\in q\I \\(a,q) = 1}}
2\sqrt{2\eps r-\eps^2}\\
&= 2\sqrt{\eps} \sum_{1\le q\le1/\sqrt{\eps}}\Big(|\I|\varphi(q)+O\big(\tau(q)\big)\Big)
\sqrt{\frac1{q^2}- \eps}\\
&= |\I|\cdot L(\eps)+O_\delta\left(\sqrt{\eps}
\sum_{1\le q\le1/\sqrt{\eps}}q^{\delta-1}
\right)\\
&= |\I|\cdot L(\eps)+O_\delta\left(\eps^{1/2-\delta}\right),
\end{split}
\end{equation*}
where $\tau(q)$ stands for the number of divisors of $q$.
Thus
\begin{equation*}
	L_{\I}(\eps)=L(\eps)+O_\delta\left(|\I|^{-1}\eps^{1/2-\delta}\right),
\end{equation*}
which, in view of Theorem~\ref{Theorem1}, concludes the proof of the theorem.

\end{proof}

\subsection{Equidistribution on the Modular Surface}\label{sec:modular} 
To prove equation (\ref{eq:equi:modular}), we use Zagiers's theorem~\cite[page 279]{Zagier} on
equidistribution of closed horocycles on the modular surface $\h^2/SL(2, \Z)$ (this was generalized by Sarnak~\cite{Sarnak:1981} to general finite-volume hyperbolic surfaces). In our context, we
can state this theorem as follows:
\smallskip
\begin{Theorem}\label{theorem:sarnak}
Let $f \in C_c(\h^2/SL(2, \Z))$. Lift $f$ to an $SL(2, \Z)$-invariant function (also denoted $f$) on
$\h^2$. Let $\mathcal F \subset \h^2$ denote the $SL(2, \Z)$-fundamental domain 
\begin{equation*}
     \mathcal F = \{z = x+iy \in \h^2: |z| > 1, |x| < 1/2\}.
\end{equation*}
Then 
\begin{equation} 
\lim_{\epsilon \rightarrow 0} \int_{0}^1 f(x+\epsilon i)\, dx 
= \frac{3}{\pi} \int_{\mathcal F} f(x,y) \,\frac{dx dy}{y^2}.
\end{equation}\end{Theorem}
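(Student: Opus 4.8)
The plan is to reduce the statement to the asymptotics of a single Fourier coefficient and then invoke the spectral decomposition of $L^2(\h^2/SL(2,\Z))$. Since $f$ is $SL(2,\Z)$-invariant it is in particular invariant under $z \mapsto z+1$, so for each fixed $y$ it admits a Fourier expansion $f(x+iy) = \sum_{n\in\Z} a_n(y)\, e^{2\pi i n x}$, and consequently $\int_0^1 f(x+\epsilon i)\,dx = a_0(\epsilon)$ is exactly the constant term at height $\epsilon$. Thus the theorem is equivalent to the assertion $\lim_{\epsilon\to 0} a_0(\epsilon) = \frac{3}{\pi}\int_{\mathcal F} f \,\frac{dx\,dy}{y^2}$. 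Because $f$ has compact support away from the cusp, $a_0(y)$ vanishes for all sufficiently large $y$ and is bounded near $y=0$, so no convergence issue arises at the ends.

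Next I would decompose $f$ against the spectral resolution of the hyperbolic Laplacian on the modular surface. For $f$ smooth (the general $f \in C_c$ following by approximation) one writes $f = \frac{\langle f,1\rangle}{\operatorname{vol}(\mathcal F)} + \sum_j \langle f, u_j\rangle u_j + \frac{1}{4\pi}\int_{-\infty}^{\infty} \langle f, E(\cdot, \tfrac12+it)\rangle\, E(z, \tfrac12 + it)\,dt$, where the $u_j$ are $L^2$-normalized Maass cusp forms and $E(z,s) = \sum_{\gamma\in\Gamma_\infty\backslash\Gamma}(\operatorname{Im}\gamma z)^s$ is the real-analytic Eisenstein series. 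The key point is that the operation $\int_0^1(\cdot)\,dx$ of extracting the zeroth Fourier coefficient acts very differently on the three pieces. The constant function contributes its own value $\frac{\langle f,1\rangle}{\operatorname{vol}(\mathcal F)} = \frac{3}{\pi}\int_{\mathcal F} f\,\frac{dx\,dy}{y^2}$, using $\operatorname{vol}(\mathcal F) = \pi/3$; this is precisely the desired limit and is independent of $y$. Every Maass cusp form has, by definition, a vanishing constant Fourier coefficient, so the entire discrete part contributes nothing. Finally, the constant term of $E(z,\tfrac12+it)$ equals $y^{1/2+it} + \varphi(\tfrac12+it)\, y^{1/2-it}$, whose modulus is $O(y^{1/2})$, so the Eisenstein contribution to $a_0(y)$ is $O(\epsilon^{1/2})$ and vanishes in the limit.

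Putting these together gives $a_0(\epsilon) = \frac{3}{\pi}\int_{\mathcal F} f\,\frac{dx\,dy}{y^2} + O(\epsilon^{1/2})$, which is the theorem with an explicit rate. It is worth recording that the scattering coefficient $\varphi(\tfrac12+it) = \xi(2it)/\xi(1+2it)$ is exactly where the Riemann zeta function enters the analysis, which is the mechanism behind the observation, quoted in the introduction, relating the finer rate of convergence for the Farey--Ford packing to the location of the zeros of $\zeta$.

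The step I expect to be the main obstacle is the rigorous treatment of the continuous-spectrum term: one must justify that the $t$-integral converges absolutely and that its zeroth Fourier coefficient is genuinely $O(y^{1/2})$ uniformly as $y \to 0$, which requires decay of the spectral coefficients $\langle f, E(\cdot,\tfrac12+it)\rangle$ in $t$. For smooth compactly supported $f$ this follows by repeatedly integrating by parts against the Laplacian, for which $E(z,\tfrac12+it)$ is an eigenfunction of eigenvalue $\tfrac14 + t^2$, yielding rapid decay in $t$; the passage to general continuous $f$ is then a routine uniform approximation, since the functional $f \mapsto \int_0^1 f(x+\epsilon i)\,dx$ is an average against a probability measure and hence continuous in the sup norm. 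An alternative, more dynamical route that sidesteps the analytic continuation of $E(z,s)$ altogether is to thicken the closed horocycle into a small flow box, push it forward under the geodesic flow $a_t = \operatorname{diag}(e^{t/2}, e^{-t/2})$, which conjugates the horocycle flow to its own time-rescaling, and invoke mixing of the geodesic flow on $\Gamma\backslash SL(2,\R)$; this yields equidistribution cleanly but without the explicit error term.
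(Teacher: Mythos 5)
Your proposal is correct and is essentially the standard proof of this statement: the paper does not prove Theorem~\ref{theorem:sarnak} itself but cites Zagier and Sarnak, whose arguments run exactly as you describe --- the zeroth Fourier coefficient of the spectral expansion picks out $\langle f,1\rangle/\vol(\mathcal F)=\frac{3}{\pi}\int_{\mathcal F}f\,\frac{dx\,dy}{y^2}$, the cusp forms drop out because their constant terms vanish, and the Eisenstein constant term $\epsilon^{1/2+it}+\vf(\tfrac12+it)\epsilon^{1/2-it}$ contributes $O(\sqrt{\epsilon})$ after integrating against the rapidly decaying spectral coefficients. The very same decomposition, specialized to the indicator function of the cusp region, is reproduced in \S\ref{riemann} of the paper (Kontorovich's letter), so your route agrees with the one the paper relies on, including your correct identification of the scattering term $\vf$ as the source of the connection to the Riemann zeta function.
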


\medskip
Relation \noindent (\ref{eq:equi:modular}) now can be derived as follows: consider the subset
$\Om\subset \mathcal F$ given by 
\begin{equation*}
     \Om  = \{z = x+iy \in \h^2: y > 1, |x| < 1/2\}.
\end{equation*}
 Note that
\begin{equation*}
     \int_{\Om} \frac{dx\,dy}{y^2} = 1 \quad \text{and}\quad
\int_{\mathcal F} \frac{dx\,dy}{y^2} =\frac{\pi}{3}.
\end{equation*}
Denote the union of the interiors of the family
of circles $\F_0$  by $\bar{\F_0}$. This given by the orbit of $\Om$ under the non-upper triangular matrices in $SL(2, \Z)$, that is,  
$$     \bar{\F_0} = \bigcup_{\gamma \in  \in SL(2, \Z), c(\gamma) \neq 0} \gamma \Om,$$ where we write $$\gamma =  \begin{pmatrix}a(\gamma) & b(\gamma) \\c(\gamma) & d(\gamma)\end{pmatrix}$$
Let $g$ be the $SL(2,\Z)$-invariant function whose restriction to the fundamental domain $\mathcal F$ is $\chi_{\Omega}$. Then, $f=1-g$ is compactly supported viewed as a function on $\h^2/SL(2,\Z)$. By construction $f$ and
$g$ are $SL(2, \Z)$-invariant functions on $\h^2$, and, as above, we use the corresponding letters
to
denote the associated functions on $\h^2/SL(2, \Z)$. Then $f \in C_c(\h^2/SL(2,\Z))$, so applying
Theorem~\ref{theorem:sarnak}, we obtain 
\begin{equation*}
     \lim_{\epsilon \rightarrow 0} \int_{0}^1 f(x+\epsilon i)\,dx 
= \frac{3}{\pi} \int_{\mathcal F} f(x,y) \frac{dx \, dy}{y^2} 
= \frac{3}{\pi}  \int_{\mathcal F \backslash \Omega} \frac{dx \,dy}{y^2} = 1- \frac{3}{\pi}.
\end{equation*}
Thus 
\begin{equation*}
    \lim_{\epsilon \rightarrow 0} \int_{0}^1 \chi_{\F_0}(x+\epsilon i)\, dx 
     = \lim_{\epsilon \rightarrow 0} \int_{0}^1 g(x+\epsilon i) \,dx 
     = \lim_{\epsilon \rightarrow 0} \left(1 - \int_{0}^1 f(x+\epsilon i) \,dx \right) 
     = \frac{3}{\pi}. 
\end{equation*}

\qed\medskip

\subsection{Kontorovich's observation on the connection to the Riemann Hypothesis}\label{riemann} In this subsection, we reproduce the contents of a letter written to us by Alex Kontorovich~\cite{Kontorovich:2014}, on how to express the Riemann hypothesis in terms of the convergence of $L(\epsilon)$. We first note that $L(\epsilon)$ can be expressed explicitly in terms of the Riemann zeta function. Precisely, if we define $\vf(s)$ by
$$
\vf(s):=\sqrt{\pi}{\Gamma(s-\foh)\over\Gamma(s)}{\gz(2s-1)\over\gz(2s)},
$$
we have
\begin{equation}\label{eq:Lh2}
L(\epsilon) =
\frac 3\pi+
\frac{\sqrt \epsilon}{4\pi}\int_{\R}
\left({1\over \foh+it}+{\vf(\foh-it)\over \foh-it}\right)
\left(
\epsilon^{it}+\vf(\tfrac12+it)\epsilon^{-it}
\right)
dt
.
\end{equation}

\

\noindent To see this, note that our $SL(2, \Z)$-invariant function $f$ (with notation as above) can be written as 
$$
f(z):=\sum_{\gamma \in\Gamma_{\infty}\backslash\Gamma }\bo_{\{\Im(\gamma z)\ge 1\}},
$$
where $\Gamma=SL(2,\Z)$ and $\Gamma_{\infty}=\mattwos 1\Z{}1$, and
$$
L(\epsilon)=
\int_{0}^{1}f(x+i\epsilon)dx
.
$$
By the spectral decomposition of automorphic forms (see \cite[Thm 15.5]{IwaniecKowalski}), we have:
$$
f(z)=
{\<f,1\> \over  \<1,1\>} + \sum_{j}
\<f,\vf_{j}\>\vf_{j}(z)
+\frac1{4\pi}\int_{\R}
\<f,E(\tfrac12+it,*)\>E(\tfrac12+it,z)dt
,
$$
where $\vf_{j}$ is an orthonormal basis of Maass cusp forms and $E(s,z)$ is the Eisenstein series.
What we really want is not $f$ but its integral over a horocycle of height $\epsilon$; thus
\beann
L(\epsilon)
&=&
\int_{0}^{1}f(x+i\epsilon)dx
\\
&=&
{\<f,1\>\over \<1,1\>} + \sum_{j}
\<f,\vf_{j}\>\int_{0}^{1}\vf_{j}(x+i\epsilon)dx
\\
&&
\hskip.5in
+\frac1{4\pi}\int_{\R}
\<f,E(\tfrac12+it,*)\>\int_{0}^{1}E(\tfrac12+it,x+i\epsilon)dx\, dt
.
\eeann

Because the $\vf_{j}$ are cusp forms, their contribution vanishes. We have $\<f,1\>=1$ and $\<1,1\>=\vol=\pi/3$; thus the main term is determined. The last term is the constant Fourier coefficient of the Eisenstein series, 
which is (see \cite[(15.13)]{IwaniecKowalski}):
$$
\int_{0}^{1}
E(s,x+i\epsilon)
dx
=
\epsilon^{s}+\vf(s)\epsilon^{1-s}
.
$$
Finally, unfolding the inner product gives
$$
\<f,E(s,*)\>
=
\int_{\Gamma_{\infty}\backslash \mathbb{H}^2}\bo_{\{\Im z\ge1\}}
\overline{E(s,z)}
dz
=
\int_{1}^{\infty}
\int_{0}^{1}
\overline{E(s,x+iy)}
dx
{dy\over y^{2}}
.
$$
Again using the Fourier expansion of the Eisenstein series, we obtain
$$
\<f,E(s,*)\>
=
\int_{1}^{\infty}
(\overline{y^{s}+\vf(s)y^{1-s}})
{dy\over y^{2}}
=
{1\over 1-\bar s}+\vf(\bar s){1\over \bar s}
.
$$
Putting everything together, we obtain
$$
L(\epsilon)
=
\frac 3\pi+
\frac1{4\pi}\int_{\R}
\left({1\over 1/2+it}+\vf(1/2-it){1\over 1/2-it}\right)
\left(
\epsilon^{1/2+it}+\vf(1/2+it)\epsilon^{1/2-it}
\right)
dt
,
$$
as claimed.
\\

\noindent The Prime Number Theorem then implies $$L(\epsilon)=3/\pi + o(\sqrt \epsilon).$$ Alternatively, one can observe that $f(z)$ is itself an Eisenstein-like series, whence by taking a Mellin transform/inverse, and shifting contours further, one sees (as in Zagier \cite{Zagier} and Sarnak \cite{Sarnak:1981}) that $$L(\epsilon)=3/\pi + O(\epsilon^{3/4-\delta})$$ if and only if the Riemann Hypothesis holds. 

\subsection{Shrinking intervals}\label{shrinking} 
Str\"ombergsson~\cite{strom}, building on work of Hejhal~\cite{Hejhal}, strengthened the Sarnak-Zagier theorem to deal with \emph{shrinking intervals}.
In our setting, he showed:

\begin{Theorem}[Str\"ombergsson~\cite{strom}, Theorem 1]\label{theorem:strom}
Let $f \in C_c(\h^2/SL(2, \Z))$. Lift $f$ to an $SL(2, \Z)$-invariant function (also denoted $f$) on
$\h^2$. Let $\mathcal F \subset \h^2$ denote the $SL(2, \Z)$-fundamental domain 
\begin{equation*}
     \mathcal F = \{z = x+iy \in \h^2: |z| > 1, \ |x| < 1/2\}.
\end{equation*}
Let $\delta >0$, and let $\beta(\epsilon) > \alpha(\epsilon)$
be functions of $\epsilon$ so that 
$\beta(\epsilon)-\alpha(\epsilon) > \epsilon^{\frac 1 2 -\delta}$. 
Then  
\begin{equation*} 
\lim_{\epsilon \rightarrow 0}
\int_{\alpha(\epsilon)}^{\beta(\epsilon)} f(x+\epsilon i)\, dx 
= \frac{3}{\pi} \int_{\mathcal F} f(x,y) \frac{dx\, dy}{y^2}.
\end{equation*}
\end{Theorem}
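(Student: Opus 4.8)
Write $H=H(\epsilon)=\beta(\epsilon)-\alpha(\epsilon)$ for the length of the shrinking interval; the content of the theorem is that the normalized average
\[
H^{-1}\int_{\alpha(\epsilon)}^{\beta(\epsilon)} f(x+\epsilon i)\,dx
\longrightarrow \frac{3}{\pi}\int_{\mathcal F} f(x,y)\,\frac{dx\,dy}{y^2}
\qquad(\epsilon\to 0).
\]
The plan is to run the Hejhal--Str\"ombergsson spectral method. Since every $f\in C_c(\h^2/SL(2,\Z))$ can be approximated in the sup norm by a smooth function, while both the main term and the averages above change by $O(\|f-f_0\|_\infty)$ uniformly in $\epsilon$, I would first reduce to $f$ smooth, so that all spectral coefficients decay faster than any polynomial and every spectral sum or integral below converges absolutely. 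I then expand $f$ on the spectrum of the Laplacian on $\h^2/SL(2,\Z)$: the constant function, the Maass cusp forms $u_j$ (with spectral parameters $t_j$), and the continuous part given by the Eisenstein series $E(1/2+it,\cdot)$. The constant-function term contributes exactly $\frac{\langle f,1\rangle}{\langle 1,1\rangle}=\frac{3}{\pi}\int_{\mathcal F} f\,\frac{dx\,dy}{y^2}$, the desired main term. The constant-in-$x$ Fourier coefficient of the Eisenstein piece, namely $\epsilon^{1/2+it}+\varphi(1/2+it)\epsilon^{1/2-it}$, survives integration over any interval and, after dividing by $H$, is identical to the full-period ($[0,1]$) contribution; hence its vanishing as $\epsilon\to0$ is exactly Zagier's analysis recalled in Theorem~\ref{theorem:sarnak}, and requires nothing new.

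\medskip
\noindent The genuinely new terms are the non-constant Fourier-in-$x$ modes of the cusp forms and of the Eisenstein series, which integrate to zero over a full period but not over a proper segment. For a cusp form $u_j(x+iy)=\sqrt y\sum_{n\ne0}\rho_j(n)K_{it_j}(2\pi|n|y)e^{2\pi inx}$, integrating over $[\alpha,\beta]$ at height $\epsilon$ and dividing by $H$ gives
\[
\frac{\sqrt\epsilon}{H}\sum_{n\ne0}\rho_j(n)\,K_{it_j}(2\pi|n|\epsilon)\,\widehat I(n),
\qquad
|\widehat I(n)|=\Big|\int_\alpha^\beta e^{2\pi inx}\,dx\Big|\le\min\!\Big(H,\tfrac{1}{\pi|n|}\Big),
\]
and the same shape holds for the Eisenstein modes (whose coefficients are divisor sums divided by $\zeta(1+2it)$). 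Two inputs control this. First, the decay $K_{it_j}(u)\ll u^{-1/2}e^{-u}$ for $u\ge 1$ truncates the sum to $|n|\ll 1/\epsilon$. Second, the second-moment (Rankin--Selberg, respectively divisor-correlation) bound $\sum_{|n|\le X}|\rho_j(n)|^2\ll_{t_j} X$ lets me apply Cauchy--Schwarz on the range $|n|\le 1/H$ (where $|\widehat I|\le H$) and dyadically on $1/H<|n|\ll1/\epsilon$ (where $|\widehat I|\le(\pi|n|)^{-1}$).

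\medskip
\noindent Carrying out these two estimates, each spectral component contributes $O_{t_j}\!\big(\sqrt\epsilon\,H^{-1}\,|\log\epsilon|\big)$, and summing against the rapidly decaying coefficients $\langle f,u_j\rangle$ and $\langle f,E(1/2+it,\cdot)\rangle$ (absolutely convergent for smooth $f$, since the polynomial growth in $t$, including $1/\zeta(1+2it)\ll\log(2+|t|)$, is beaten by the decay of the coefficients) yields a total error of $O_f\!\big(\sqrt\epsilon\,H^{-1}|\log\epsilon|\big)$. Under the hypothesis $H>\epsilon^{1/2-\delta}$ this is $O_f(\epsilon^{\delta}|\log\epsilon|)\to0$, completing the argument.

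\medskip
\noindent The main obstacle is precisely this last estimate: the factor $\sqrt\epsilon/H$ exhibits the \emph{square-root barrier}, the threshold $H\asymp\sqrt\epsilon$ below which the second-moment method fails, because replacing the triangle inequality by genuine cancellation in $\sum_{|n|\le1/H}\rho_j(n)K_{it_j}(2\pi|n|\epsilon)$ would require Kuznetsov-type or subconvexity input. The role of $\delta$ in the hypothesis is exactly to keep $H$ a fixed power above this barrier, so that the available convexity-strength bounds suffice; pushing down to $H=\epsilon^{1/2}$ or below is genuinely harder and lies outside the reach of this method.
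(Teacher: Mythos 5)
The paper does not actually prove this statement: it is quoted verbatim from Str\"ombergsson~\cite{strom} (Theorem 1), building on Hejhal~\cite{Hejhal}, and is used as a black box to deduce Theorem~\ref{strom}. So there is no internal proof to compare against; what you have written is a reconstruction of the cited proof itself. As such, your outline is the right one and is essentially the Hejhal--Str\"ombergsson argument: spectral decomposition into the constant function, Maass cusp forms, and the Eisenstein continuous spectrum; the observation that only the zero-th Fourier mode survives a full-period integral while an incomplete interval picks up all modes weighted by $\widehat I(n)$ with $|\widehat I(n)|\le\min(H,(\pi|n|)^{-1})$; exponential decay of $K_{it}(u)$ to truncate at $|n|\ll1/\epsilon$; and Cauchy--Schwarz against the Rankin--Selberg second moment to land at the error $O(\sqrt\epsilon\,H^{-1}|\log\epsilon|)$, which is exactly why the hypothesis $H>\epsilon^{1/2-\delta}$ appears. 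Your closing remarks on the square-root barrier are also accurate. One genuinely useful catch: the statement as printed in the paper omits the normalization $\frac{1}{\beta(\epsilon)-\alpha(\epsilon)}$ in front of the integral (without it the left side tends to $0$ or scales with the interval length); you silently restored it, and your normalized form is the correct statement of Str\"ombergsson's theorem.

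Two points in your sketch deserve more care before it could be called a proof. First, summing over the spectrum requires that every implied constant --- in the Rankin--Selberg bound $\sum_{|n|\le X}|\rho_j(n)|^2\ll_{t_j}X$ and in the Bessel estimates --- depend at most polynomially on $t_j$ \emph{after} the exponential factors $e^{\pm\pi t_j/2}$ hidden in the normalization of $\rho_j(n)$ and in $K_{it_j}$ have been cancelled against each other; this is true and standard but is precisely the technical content of Hejhal's and Str\"ombergsson's papers, not a one-line remark. Second, the continuous spectrum needs the analogous second-moment bound for the divisor-sum coefficients together with $1/\zeta(1+2it)\ll\log(2+|t|)$, and the $t$-integral must be controlled uniformly; you gesture at this but do not carry it out. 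Neither issue is a flaw in the strategy --- they are the places where the sketch would have to become quantitative.
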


This immediately yields:

\begin{Theorem}\label{strom} 
Let $\delta >0$, and let $\beta(\epsilon) > \alpha(\epsilon)$ be functions of $\epsilon$ so that
$\beta(\epsilon)-\alpha(\epsilon) > \epsilon^{\frac 1 2 - \delta}$. Then  
\begin{equation*} 
     \lim_{\epsilon \rightarrow 0} 
     \int_{\alpha(\epsilon)}^{\beta(\epsilon)} \chi_{\F_0}(x+\epsilon i)\, dx =
     \frac{3}{\pi}\,.
\end{equation*}
\end{Theorem}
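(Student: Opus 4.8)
The plan is to reprise the argument of \S\ref{sec:modular} essentially verbatim, replacing the fixed interval $[0,1]$ by the shrinking interval $[\alpha(\epsilon),\beta(\epsilon)]$ and the Sarnak--Zagier input (Theorem~\ref{theorem:sarnak}) by Str\"ombergsson's shrinking-interval equidistribution (Theorem~\ref{theorem:strom}). Recall from \S\ref{sec:modular} the $SL(2,\Z)$-invariant function $g$ whose restriction to $\mathcal F$ is $\chi_{\Om}$, together with $f=1-g$, which is supported on the compact set $\mathcal F\setminus\Om$. First I would record the pointwise identity $\chi_{\F_0}(x+\epsilon i)=g(x+\epsilon i)=1-f(x+\epsilon i)$, valid for every real $x$ and every $0<\epsilon<1$: since $y=\epsilon<1$ the point $x+\epsilon i$ never lies in the cusp region $\{y>1\}$, so it lies in the $SL(2,\Z)$-orbit of $\Om$ precisely when it lies inside one of the Ford circles. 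Because $g$ and the Ford family are both $\Z$-periodic, this identity holds on \emph{any} interval, so replacing $[0,1]$ by the translate $[\alpha(\epsilon),\beta(\epsilon)]$ causes no difficulty.

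Next I would pass to the average over the arc. Writing $\ell=\beta(\epsilon)-\alpha(\epsilon)$ and reading the horocycle integrals as averages (the probability normalization built into $\mu_\epsilon$ and into the equidistribution statement, where $\frac3\pi=1/\vol(\mathcal F)$), we have
\begin{equation*}
\frac1\ell\int_{\alpha(\epsilon)}^{\beta(\epsilon)}\chi_{\F_0}(x+\epsilon i)\,dx
=1-\frac1\ell\int_{\alpha(\epsilon)}^{\beta(\epsilon)}f(x+\epsilon i)\,dx.
\end{equation*}
The hypothesis $\ell>\epsilon^{1/2-\delta}$ is exactly the window condition of Theorem~\ref{theorem:strom}, so applying that theorem to $f$ gives
\begin{equation*}
\lim_{\epsilon\to0}\frac1\ell\int_{\alpha(\epsilon)}^{\beta(\epsilon)}f(x+\epsilon i)\,dx
=\frac3\pi\int_{\mathcal F}f\,\frac{dx\,dy}{y^2}
=\frac3\pi\int_{\mathcal F\setminus\Om}\frac{dx\,dy}{y^2}
=\frac3\pi\Big(\frac\pi3-1\Big)=1-\frac3\pi,
\end{equation*}
using the area computations $\int_{\mathcal F}y^{-2}\,dx\,dy=\pi/3$ and $\int_{\Om}y^{-2}\,dx\,dy=1$ recorded in \S\ref{sec:modular}. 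Subtracting from $1$ yields the claimed limit $3/\pi$.

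The one point that genuinely needs care is that Theorem~\ref{theorem:strom} is stated for $f\in C_c(\h^2/SL(2,\Z))$, whereas our $f=\chi_{\mathcal F\setminus\Om}$ is only a bounded, compactly supported \emph{indicator}, discontinuous along $\partial(\mathcal F\setminus\Om)$ (the arcs $y=1$ and $|z|=1$). I expect this to be the main obstacle, and I would resolve it by the standard sandwiching argument: for each $\eta>0$ choose continuous, compactly supported $f^-\le f\le f^+$ on $\h^2/SL(2,\Z)$ with $\int_{\mathcal F}(f^+-f^-)\,y^{-2}\,dx\,dy<\eta$, which is possible because $\partial(\mathcal F\setminus\Om)$ has hyperbolic area zero. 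Applying Theorem~\ref{theorem:strom} to $f^\pm$ and letting $\eta\to0$ squeezes the shrinking-interval average of $f$ to $\frac3\pi\int_{\mathcal F}f\,y^{-2}\,dx\,dy$. Since the window hypothesis $\ell>\epsilon^{1/2-\delta}$ is imposed uniformly, the same pair $f^\pm$ serves for all small $\epsilon$, so the squeeze survives the limit; this is the only place the precise shrinking rate enters, and it does so solely through the hypothesis of Str\"ombergsson's theorem.
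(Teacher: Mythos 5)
Your proposal is correct and follows exactly the route the paper intends: the paper derives Theorem~\ref{strom} from Theorem~\ref{theorem:strom} by rerunning the $f=1-g$ argument from \S\ref{sec:modular}, which is precisely what you do. Your additional care on the two points the paper elides --- reading the integrals as normalized averages, and sandwiching the discontinuous indicator $f=\chi_{\mathcal F\setminus\Om}$ between genuine $C_c$ functions --- is warranted and handled correctly.
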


\section{General Packings}\label{sec:gen} 
\noindent  In this section, we give geometric proofs of Theorem~\ref{theorem:main:circle} and
Theorem~\ref{theorem:main:horo}. We first state two preliminary lemmas: first, a standard statement
about transitivity of M\"obius transformations (\S\ref{sec:mobius}) and second (\S\ref{sec:shah}), a
deep theorem of Shah's generalizing \mbox{Sarnak's} equidistribution result
Theorem~\ref{theorem:sarnak}.

\subsection{M\"obius Equivalence}\label{sec:mobius} 
The key fact that allows us to reduce the general packing to the Farey-Ford packing is the fact that
the group of  M\"obius transformations (i.e., $SL(2,\C)$) acts transitively on triples of points on
the Riemann sphere $\widehat{\C}$. This allows us to show:

\begin{lemma}\label{lemma:equiv} 
Let $\p$ be an ACP,  $C_0$ a circle in $\p$, and $\p_0$ the collection of circles in $\p$ tangent
to $C_0$. Then there is a M\"obius transformation $M: \widehat{\C} \rightarrow \widehat{\C}$ such
that
$M(C_0) = \R \subset \widehat{\C}$ and  $M(\p_0)$ is the collection of Ford circles $\F_0$,
together with the horizontal line $\{i+t: t\in \R\}$.

\end{lemma}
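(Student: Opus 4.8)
The plan is to exploit the two facts highlighted in the surrounding text: that $SL(2,\C)$ acts transitively (indeed simply transitively) on ordered triples of distinct points of $\widehat{\C}$, and that any Apollonian configuration tangent to a fixed circle $C_0$ is M\"obius-equivalent to the standard Ford configuration tangent to $\R$. Concretely, I would first pick three distinguished points on $C_0$: namely the two tangency points of $C_0$ with a chosen adjacent pair of circles in $\p_0$, together with one more point of $C_0$ (for instance a third tangency point, using that $C_0$ meets infinitely many members of $\p_0$). Because $C_0$ is a circle in $\widehat{\C}$ and $\R\cup\{\infty\}$ is also a circle, and a M\"obius map carries circles to circles and is determined by where it sends three points, I would define $M$ to be the unique element of $SL(2,\C)$ sending these three points of $C_0$ to three prescribed points of $\R\cup\{\infty\}$ (e.g. $0,1,\infty$). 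By three-point transitivity such an $M$ exists and satisfies $M(C_0)=\R$.

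The second step is to verify that $M$ carries $\p_0$ precisely onto the Ford family $\F_0$ together with the line $i+\R$. Here the key structural input is that M\"obius transformations preserve tangency: two circles tangent in $\widehat{\C}$ map to two tangent circles, and tangency to $C_0$ is preserved because $M(C_0)=\R$. Thus $M(\p_0)$ is a collection of circles each tangent to $\R$, mutually tangent in the same incidence pattern as $\p_0$. The tangency points of the circles of $\p_0$ along $C_0$ correspond, under the classical dictionary between Apollonian tangency data and the Farey/Stern--Brocot structure, to rational points on $\R$; I would invoke this known correspondence (the same one that makes the Farey--Ford packing the universal model for a circle together with its tangent neighbors) to identify $M(\p_0)$ with the Ford circles based at rationals, plus the unique ``top'' circle through $\infty$, which is the horizontal line $i+\R$. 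A normalization of the three chosen points then pins down the scaling so that the diameters are exactly $1/q^2$.

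The main obstacle I anticipate is the rigidity needed in the second step: simply knowing that $M(\p_0)$ is a family of mutually tangent circles tangent to $\R$ does not by itself force it to be \emph{the} Ford packing, since there are many horoball packings of the upper half plane. What rescues the argument is that an Apollonian packing tangent to a line is completely rigid once the scale and one tangency point are fixed --- the recursive Apollonian construction (Descartes circle relation / the two-circles-tangent-to-a-triple property) determines all subsequent circles uniquely from the initial configuration. I would therefore argue that after sending two adjacent tangency points and the appropriate scale to those of the standard configuration, the uniqueness in Apollonius' theorem propagates the identification inductively to the whole of $M(\p_0)$. This reduces the lemma to checking the base case (one pair of adjacent tangent circles maps to two adjacent Ford circles, e.g.\ $C_{0,0/1}$ and $C_{0,1/1}$) and then citing the deterministic nature of the packing, which is exactly the content of the Apollonian/Farey correspondence referenced earlier in the paper.
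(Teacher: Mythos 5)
Your outline --- three-point transitivity of M\"obius maps, preservation of tangency, and rigidity of the Apollonian construction once a mutually tangent triple is fixed --- is the same strategy the paper uses, and your third paragraph correctly identifies and handles the rigidity point that the paper's own proof passes over in a single sentence. However, there is a concrete gap in your normalization step. You place all three reference points on $C_0$ and allow the third to be an essentially arbitrary tangency point. Sending three points of $C_0$ to $0,1,\infty$ does give $M(C_0)=\R$, but it does not force the ``base case'' you defer to. Concretely, if $C_1,C_2$ are the adjacent circles tangent to $C_0$ at your first two points, then $M(C_1)$ and $M(C_2)$ are circles tangent to $\R$ at $0$ and $1$ and tangent to each other, which constrains their radii only by $4r_1r_2=1$; the Ford configuration requires $r_1=r_2=\tfrac12$, and a generic choice of third point will not produce it (the image is then a M\"obius-distorted horoball packing whose tangency points with $\R$ are not the rationals). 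The third point must be the point where $C_0$ meets a circle $C_3$ tangent to \emph{both} $C_1$ and $C_2$ --- i.e., one of the two Apollonius circles of the triple $(C_0,C_1,C_2)$ --- and it must be the one sent to $\infty$, so that $M(C_3)$ becomes a horizontal line; tangency of that line to $M(C_1)$ and $M(C_2)$ then forces both to have diameter $1$, which is exactly the base case you need.

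The paper sidesteps this by choosing a different triple of points: the three \emph{mutual} tangency points of $C_0,C_1,C_2$, namely $z_0=C_0\cap C_1$, $z_1=C_0\cap C_2$, and $z_2=C_1\cap C_2$ (note $z_2$ does \emph{not} lie on $C_0$), sent to $\infty,0,i$. This pins down $M(C_0)=\R$, $M(C_1)=i+\R$, and $M(C_2)=\partial B(i/2,1/2)$ with no residual parameter, and also fixes the orientation (upper versus lower half-plane), which your all-points-on-$C_0$ normalization leaves ambiguous. With the third point corrected as above your argument goes through, but as written the step ``a normalization of the three chosen points then pins down the scaling'' is precisely the assertion that requires this specific choice rather than an arbitrary one.
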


\begin{proof} 
Consider $C_0$ and two circles tangent to $C_0$ (and each other) in $\p$, call them $C_1$ and $C_2$,
so that $C_0, C_1, C_2$ form a triple of mutually tangent circles. Let $z_0$ denote the point of
tangency between $C_0$ and $C_1$, $z_1$ the point of tangency between $C_0$ and $C_2$, and $z_2$ the
point of tangency between $C_1$ and $C_2$.  Then there is a M\"obius transformation $M$ that
satisfies 
\begin{equation*} 
     f(z_0) = \infty, f(z_1) =0, f(z_2) = i.
\end{equation*}

Since M\"obius transformations preserve
tangency, it follows that $M(C_0)$ is the real axis, $M(C_1)$ is the line $\{ i + t: t \in \R\}$,
 $C_2$ is the circle of radius $\frac{1}{2}$ centered at $\frac{i}{2}$.
Moreover, $M$ maps the packing
$\p$ to the Farey-Ford packing $\F$ and, particularly,
 $M$ maps circles in $\p$ tangent to $C_0$ to
circles in $\F$ tangent to $f(C_0)$, namely, the Ford circles together with the line 
$\{i +t : t\in \R\}$.
\end{proof}

\subsection{Shah's Equidistribution Theorem}\label{sec:shah} 
The main ingredients we require in our general proofs (both here and in \S\ref{sec:sphere}) are
special cases of a very deep result of Shah~\cite[Theorem 1.4]{Shah:1996} on equidistribution of closed
horospheres. In the context of the Farey-Ford packing, this theorem implies the following: let
$\eta$ denote an absolutely continuous (with respect to Lebesgue measure) probability measure on
$\R$. Fix notation as in Theorem~\ref{theorem:sarnak}, in particular, given 
$f \in C_c(\h^2/SL(2, \Z))$, we lift $f$ to an $SL(2, \Z)$-invariant function (also denoted $f$) on
$\h^2$, and $\mathcal F = \{z = x+iy \in \h^2: |z| > 1, |x| < 1/2\}$ is a fundamental domain for the
$SL(2, \Z)$ action on $\h^2$. 
\begin{Theorem}\label{theorem:shah} 
For any $f \in C_c(\h^2/SL(2, \Z))$, we have

 \begin{equation*} 
\lim_{\epsilon \rightarrow 0} \int_{0}^1 f(x+\epsilon i) \, d\eta(x) = \frac{3}{\pi} 
     \int_{\mathcal F} f(x,y)\, \frac{dx\, dy}{y^2},
\end{equation*}

\end{Theorem}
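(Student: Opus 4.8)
The plan is to translate the weighted horocycle integral into homogeneous dynamics on $\Gamma\backslash G$, with $G = SL(2,\R)$ and $\Gamma = SL(2,\Z)$, and then recognize the regime $\eps \to 0$ as the expanding-translate situation governed by Shah's theorem. First I would lift $f$ to the function $F$ on $G$ defined by $F(g) = f(g\cdot i)$, where $G$ acts on $\h^2$ by M\"obius transformations; this $F$ is left-$\Gamma$-invariant and right-$SO(2)$-invariant, so it descends to the modular surface. Writing $n_x = \mattwos{1}{x}{0}{1}$ and $a_y = \mattwos{y^{1/2}}{0}{0}{y^{-1/2}}$, one has $n_x a_\eps\cdot i = x + \eps i$, whence
\[
\int_0^1 f(x+\eps i)\,d\eta(x) = \int_{\Gamma\backslash G} F\, d\big((a_\eps)_*\nu_\eta\big),
\]
where $\nu_\eta$ is the pushforward of $\eta$ under $x\mapsto \Gamma n_x$, a measure supported on (a piece of) the closed horocycle through $\Gamma$ at height $1$.

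Next I would exhibit $(a_\eps)_*\nu_\eta$ as a family of expanding horocycle translates. From the commutation relation $a_\eps^{-1} n_s a_\eps = n_{s/\eps}$ one sees that right translation by $a_\eps$ stretches the horocycle parameter by the factor $1/\eps \to \infty$; equivalently $a_\eps$ is the geodesic flow run for time $\log\eps\to-\infty$, under which the horocycle subgroup $\{n_s\}$ is expanded. Thus the measures $(a_\eps)_*\nu_\eta$ are precisely the expanding translates, along the geodesic direction, of a fixed absolutely continuous measure on a horocycle arc --- exactly the input to Shah's Theorem~\cite[Theorem 1.4]{Shah:1996}. That theorem identifies the weak-$*$ limit as a homogeneous measure invariant under the expanding horocycle; in the present rank-one setting, and using that $\eta$ is absolutely continuous, this forces the limit to be the normalized Haar probability measure $m$ on $\Gamma\backslash G$.

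Finally I would read off the constant. The measure $m$ projects to $\frac1{\vol(\F)}\,\frac{dx\,dy}{y^2} = \frac3\pi\,\frac{dx\,dy}{y^2}$ on the modular surface, since $\vol(\F) = \pi/3$; and because $\eta$ is a probability measure the total mass is unchanged in the limit. Hence $\lim_{\eps\to 0}\int_0^1 f(x+\eps i)\,d\eta(x) = \frac3\pi\int_\F f\,\frac{dx\,dy}{y^2}$, as asserted.

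The one genuinely delicate point --- and what I expect to be the main obstacle --- is the verification that $\nu_\eta$ meets the admissibility hypotheses of Shah's theorem, above all the exclusion of escape of mass into the cusp as the horocycle arc is expanded. This is exactly where absolute continuity of $\eta$ is indispensable: it permits the Dani--Margulis nondivergence estimates (packaged within Shah's theorem) to certify that no portion of $(a_\eps)_*\nu_\eta$ concentrates high in the cusp, so that the entire unit of mass equidistributes rather than being lost, and the limit is genuinely $m$ and not a sub-probability measure. I would therefore devote most of the work to confirming this nondivergence and genericity, after which the conclusion follows at once from Shah's theorem. For $\eta$ with continuous density one could instead approximate by normalized indicators of fixed subintervals and equidistribute each arc separately, but the direct appeal to Shah handles all absolutely continuous $\eta$ uniformly.
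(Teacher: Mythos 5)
Your proposal is correct and follows the same route as the paper: the paper states this result as a direct special case of Shah's equidistribution theorem \cite[Theorem 1.4]{Shah:1996} and offers no further argument, while you supply the explicit dictionary (lifting $f$ to $\Gamma\backslash G$, writing the height-$\epsilon$ horocycle integral as an expanding $a_\epsilon$-translate of the absolutely continuous measure $\nu_\eta$ on the closed horocycle, and invoking nondivergence to identify the limit as Haar measure with the normalization $\vol(\mathcal F)=\pi/3$). This is exactly the reduction the paper leaves implicit, so there is nothing to correct.
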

\bigskip

\subsection{Proof of Theorem~\ref{theorem:main:horo}}\label{subsec:horo:proof} 
To prove Theorem~\ref{theorem:main:horo}, note that if $M$ is the transformation as in
Lemma~\ref{lemma:equiv}, the measure $M_* \tilde{\mu}_{\epsilon}$ is supported on the line 
$\{x + \epsilon i: x \in \R\}$, and we can write 
$dM_* \tilde{\mu}_{\epsilon}(x+\epsilon i) = d\eta(x)$, where $\eta$ is an absolutely continuous
probability measure on $\R$. 
Now, applying Theorem~\ref{theorem:shah}, and following the argument at
the end of \S\ref{sec:modular}, we have Theorem~\ref{theorem:main:horo}.
\qed\medskip

\subsection{Eskin-McMullen Equidistribution Theorem} 
Given a general ACP packing $\p$, and $M$ as in
Lemma~\ref{lemma:equiv}, the image $M(C_{\epsilon})$ under the M\"obius transformation $M$ is a
large circle $\partial B(z_{\epsilon}, R_{\epsilon})$ ($R_{\epsilon} \rightarrow \infty$ as
$\epsilon \rightarrow 0$) in the upper half-plane, and the measure $\eta_{\epsilon} =
M_{*}\mu_{\epsilon}$ is supported on this large circle. We write 
 \begin{equation*} 
      M(C_{\epsilon}) = \{z_{\epsilon}, a_{\epsilon} r_{\epsilon, \theta}\}_{\theta \in [0, 2\pi)},
 \end{equation*}
where $z_{\epsilon}$ denotes the
center of the circle, $a_{\epsilon}$ is a fixed diagonal matrix whose trace is 
$\cosh (R_{\epsilon}/2)$, and $r_{\epsilon, \theta}$ denotes rotation by angle $\theta$ around
$z_{\epsilon}$. The measure $\eta_{\epsilon}$ can be then be thought of as a measure on $[0, 2\pi)$,
and is absolutely continuous with respect to Lebesgue measure.

We would like to say that these measures equidistribute when projected to the modular surface as 
$\epsilon \rightarrow 0$, and this is given to us by the following (small modification of a) result
of Eskin-McMullen~\cite[Theorem 1.2]{EsMc:1993}. 

For the sake of clarity, we state it in the special case of hyperbolic manifolds. 
Let $\h^n$ denote $n$-dimensional hyperbolic space, and let $G= SO(n, 1)$. Then $G$ is the isometry
group of $\h^n$, and given a lattice $\Gamma \subset G$, we can form the quotient $X = \h^n/\Gamma$,
a finite-volume hyperbolic orbifold. Let $K = SO(n)$ denote a maximal compact subgroup of $G$, and
$P$ a minimal parabolic subgroup so $G= PK$. Let $A$ be a maximal connected $\R$-diagonalizable
subgroup of $G$ contained in $Z_G(M) \cap P$, where $M = P \cap K$. $A$ is a one-parameter subgroup
of $G$. Given $x_0 \in \h^n$, the sphere of radius $R$ centered at $x_0$ in $\h^n$ can be viewed as
\begin{equation*}
     \{x_0. a_{t} k\}_{k \in K_{0}},
\end{equation*}
where $K_{0}$ is the conjugate of $K$ that stabilizes $x_0$.
Here $t$ is chosen so $d(x_0.a_{t}, x_0) = t$.  Fix an absolutely continuous probability measure
$\eta$ on $K$ (which yields a measure on all the conjugates). Let $\pi: \h^n \rightarrow X$ and let
$dx$ denote the Lebesgue probability measure on $X$.

\begin{Theorem}\label{theorem:eskinmcmullen} 
Let $a_{t_m} \rightarrow \infty$ be a sequence of elements in $A$, $\{x_m\}_{m\ge 0}
\subset \h^n$ such that $x_m \rightarrow y \in \h^n$ and let $K_m$ be the conjugates of $K$
stabilizing $x_m$. Let $\eta_m$ denote the corresponding measures. Then, for any $f \in
C_c(\h^n/\Gamma)$,
 \begin{equation*} 
    \lim_{m \rightarrow \infty} \int_{K_m} f(\pi(x_m. a_m k))\, d\eta_m(k) = \int_{X} f(x)\, dx\,.
 \end{equation*}

\end{Theorem}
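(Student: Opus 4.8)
The plan is to deduce this from mixing of the geodesic flow on the finite-volume quotient $\Gamma\backslash G$, via the thickening (``wavefront'') method of Eskin--McMullen and Margulis. The only genuinely new feature, relative to the fixed-basepoint statement, is that the basepoint $x_m\to y$ and the angular measure $\eta_m$ are allowed to vary with $m$, so throughout one must keep the estimates uniform in $m$.

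First I would transfer the problem to the homogeneous space $\Gamma\backslash G$, the orbifold frame bundle of $X$. Choosing lifts $g_m\in G$ with $x_m=x_0.g_m$ and $g_m\to g_\infty$ (possible since $x_m\to y$), and using that $f$, viewed on $\h^n/\Gamma=(\Gamma\backslash G)/K$, is right $K$-invariant, the sphere integral becomes
\begin{equation*}
I_m(f)=\int_K f\big(\pi(g_m\,a_m\,k)\big)\,d\eta(k)=\Phi_m(g_m),\qquad \Phi_m(g):=\int_K f\big(\pi(g\,a_m\,k)\big)\,d\eta(k),
\end{equation*}
where $\Phi_m$ is a continuous function on $\Gamma\backslash G$. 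It then suffices to prove $\Phi_m(g_m)\to\int_X f\,dx$ for all $f\in C_c(\Gamma\backslash G)$.

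The analytic heart is mixing. Fix a nonnegative $\psi\in C_c(G)$ supported in a small neighborhood of the identity with $\int_G\psi=1$, set $\psi_{g_m}(g)=\psi(g_m^{-1}g)$, and approximate $\Phi_m(g_m)$ by $\int_{\Gamma\backslash G}\Phi_m(g)\,\psi_{g_m}(g)\,dg$. Unfolding the $K$-integral rewrites this as
\begin{equation*}
\int_K\<\,\rho(a_m)(k\cdot f),\ \psi_{g_m}\,\>\,d\eta(k),
\end{equation*}
where $\rho$ is the right regular representation. Since $SO(n,1)$ is simple and noncompact and the only $G$-invariant vectors in $L^2(\Gamma\backslash G)$ are the constants (as $\Gamma$ is a lattice), the Howe--Moore theorem forces matrix coefficients on $L^2_0(\Gamma\backslash G)$ to decay; equivalently, the $A$-action is mixing. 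As $a_m\to\infty$ in $A$ each coefficient above tends to $\int_X f\,dx$, and the uniform form of Howe--Moore makes this convergence uniform over $k\in K$ and over the precompact family $\{\psi_{g_m}\}$ (precompact because $g_m\to g_\infty$). Averaging over $\eta$ and then shrinking the support of $\psi$ to $g_m$ yields $\Phi_m(g_m)\to\int_X f\,dx$.

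The main obstacle is justifying the approximation $\Phi_m(g_m)\approx\int\Phi_m\,\psi_{g_m}$ \emph{uniformly in $m$}: the function $\Phi_m$ is built from the expanding translate $a_m$, and a perturbation of $g_m$ in an unstable direction is magnified by conjugation by $a_m$, so $\Phi_m$ is not equicontinuous at a fixed scale. This is exactly the situation resolved by the wavefront lemma, which controls how the expanding translate distorts small neighborhoods transverse to the sphere and shows that, at the relevant scale, such neighborhoods remain comparable to genuine small open sets with error controlled independently of $m$. A secondary subtlety, since $X$ is noncompact of finite volume, is no escape of mass; this does not arise here because we test only against compactly supported $f$ and the thickened comparison recovers the full probability mass in the limit.
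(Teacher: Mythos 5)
Your proposal is correct and follows essentially the same route as the paper: both reduce to the homogeneous space $\Gamma\backslash G$, handle the varying basepoint by passing to $g_m\to e$, and then run the Eskin--McMullen mixing-plus-thickening argument, with your wavefront-lemma step corresponding to the paper's local $U^-AU^+$ decomposition of $Kg_m$ and its surjective projection onto the expanding horospherical subgroup $U^+$. The paper's own proof is only a brief sketch of this same modification (attributed to Shah), so no substantive divergence to report.
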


\begin{proof} 
The main difference between this result and~\cite[Theorem 1.2]{EsMc:1993} is that the basepoint of the
translates ($x_m$ in our notation) is allowed to vary. We sketch the 
required modification of their proof,
explained to us by N.~Shah~\cite{Shah:1996}. 
Lifting to $G/\Gamma$, our question becomes
whether $a_{t_m}Kg_m\Gamma/\Gamma$ equidistributes as $t_m\to\infty$ and $g_m\to g$. We can first
assume that $g_n\to e$. Next, decompose the translate $Kg_n$ into $U^{-} A U^+$ (where $U^+$ and
$U^-$
are the positive and negative \emph{horospherical} subgroups \mbox{associated to $A$).} Since
$Kg_n$
locally (at essentially all $k\in K$) projects to $U^+$ surjectively, the mixing of the $a_t$ action
gives the desired equidistribution, as in the classical setting.

\end{proof}

\subsection{Proof of Theorem~\ref{theorem:main:circle}}\label{subsec:circle:proof} 
Applying Theorem~\ref{theorem:eskinmcmullen} to the indicator function of the region 
$\{z \in \h^2: |z|>1, \mbox{Im}(z) < 1\}$ as at the end of \S\ref{sec:modular}, we obtain our
result.
\qed\medskip

\section{Sphere Packings}\label{sec:sphere}
 To prove Theorem~\ref{theorem:main:sphere} and Theorem~\ref{theorem:main:horosphere}, we follow a
similar strategy as in the circle packing case.

\subsection{The base packing}
 Again using the transitivity of the action of M\"obius maps, we can map any packing to a base
packing
given by a packing between the two planes $x_3=0$ and 
$x_3=1$ in $\R^3 = \{(x_1, x_2, x_3): x_i \in \R\}$, including spheres tangent to points $(x_1,
x_2, 0)$ and $(x_1, x_2, 1)$ at points $(x_1, x_2)$ such that $x_1 + i x_2  \in \Q[\sqrt{-3}]$. The
view of (a part of) this packing from below the $xy$-plane is shown in Figure~\ref{fig:3dbottom}.
The
stabilizer of this packing, $\Gamma_3$, is a non-uniform lattice in the isometry group of $\h^3$ and
has been explicitly computed (and was named the \emph{Soddy group}) by
Kontorovich~\cite{Kontorovich:2012}. 
$\Gamma_3$ can be expressed as follows: 
 \begin{equation*}
      \Gamma_3 : = \left\{
	  \left(\begin{array}{cc}a & b \\c & d\end{array}\right) \in SL_2(\mathcal O_3): \ 
	       b \equiv 0	  \pmod 3\right\},
 \end{equation*}
where $\mathcal O_3$ is the ring of integers in $\Q[\sqrt{-3}]$. 

Similar to the
circle packing setting, the density we are interested in corresponds to computing how much of a
plane at height $\epsilon$ intersects the spheres which are tangent to the plane $x_3 =0$.  
\begin{figure}
\centering
\includegraphics[height=80mm, width=80mm]{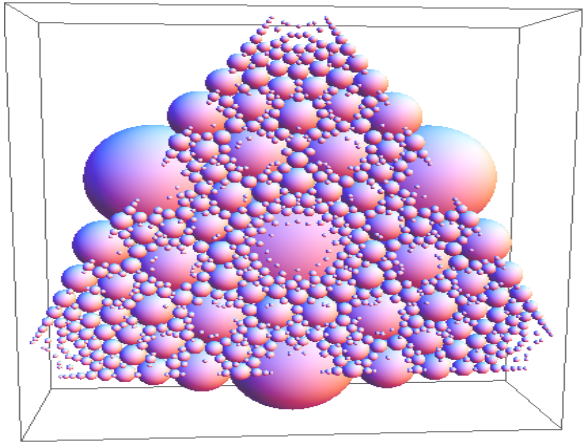}
\caption{\small The base packing viewed from below the $xy$-plane.}\label{fig:3dbottom}
\end{figure}
As $\epsilon \rightarrow 0$, this corresponds to how much of a large closed horosphere 
intersects the cusp in the hyperbolic manifold $\h^3/\Gamma_3$. The cusp density of $\h^3/\Gamma_3$
can be computed explicitly using the fundamental domain computed by
\mbox{Kontorovich~\cite[\S3]{Kontorovich:2012}}. The
fundamental domain is the union of two ideal hyperbolic tetrahedra with dihedral angles $\pi/3$, and the cusp corresponds to the
region above the plane tangent to the bottom spheres of the fundamental domain. The density can then
be seen to be $\frac{\sqrt{3}}{2V_T}=0.853\dots$, where $V_T$ denotes the volume of an ideal
hyperbolic tetrahedron with dihedral angles $\pi/3$. This is in fact the \emph{same} as the cusp density of $\h^3/\Gamma_8$,
where $\h^3/\Gamma_8$ is the figure-8 knot complement. As in the two-dimensional case, this number
is the maximal cusp density of a hyperbolic $3$-orbifold (see Adams~\cite{Adams:2002}), or equivalently,
the maximal
density of a horoball packing of $\h^3$.

\begin{figure}[h]
\centering
\includegraphics[width=0.87\textwidth]{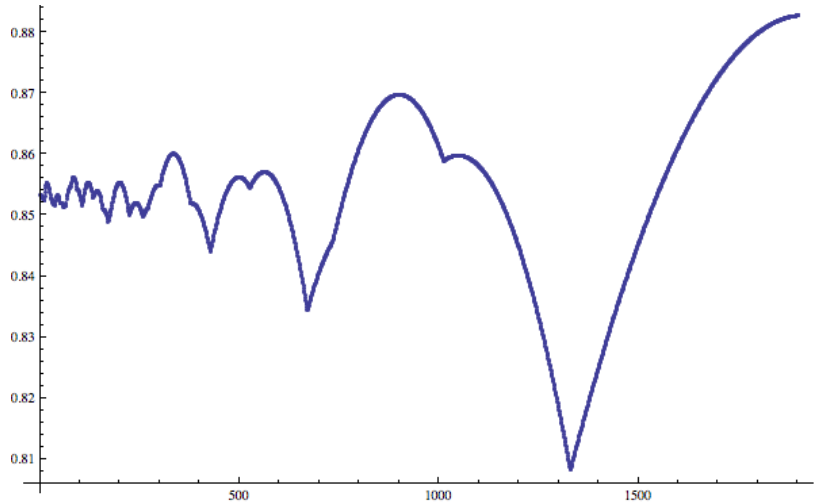}
\captionsetup{width=0.78\textwidth}
\caption{\small The horospherical density in the base packing, computed numerically. The
$x$-axis is the height of the intersecting horosphere and the $y$-axis is the
density.}\label{fig:3dstat}
\end{figure}

\subsection{Horospheres} 
To prove Theorem~\ref{theorem:main:horosphere}, we again convert our given packing to the base
packing, obtaining a new family of measure on the planes of height $\epsilon$. Applying the
$3$-dimensional version of Shah's equidistribution theorem, we obtain the result. For the base
packing, you can see the numerical computations in Figure~\ref{fig:3dstat}. For further details on these numerical experiments, see Hempstead~\cite{Hempstead}.\qed\medskip

\subsection{Spheres} 
To prove Theorem~\ref{theorem:main:sphere}, we apply the (modified) Eskin-McMullen equidistribution
theorem in
this setting. \qed\medskip

\subsubsection{Curves on Spheres} 
We remark that we can also prove a strengthening of our result restricted to appropriate curves on
spheres, using another beautiful equidistribution result of Shah~\cite[Theorem 1.1]{Shah1:2009}, which
allows us to consider the radial density (in the sphere packing setting) restricted to analytic 
curves on our spheres which are not contained in any proper subsphere. Namely, we have the
following: let $\phi: [0, 1] \rightarrow S^2$ be an analytic curve whose image is not contained in
any proper subsphere. Let $\psi_{\epsilon}: S^2 \rightarrow S_{\epsilon}$ denote the natural
parameterization of the concentric spheres $S_{\epsilon}$ (notation as in \S\ref{sec:soddy}), and
let $\eta_{\epsilon} = (\psi_{\epsilon} \circ \phi)_* m$ denote the push-forward of the Lebesgue
probability measure on $[0,1]$ onto $S_{\epsilon}$. Then we have 
\begin{equation*}
     \lim_{\epsilon \rightarrow 0} \eta_{\epsilon} (\p_0) = \frac{\sqrt{3}}{2V_T}=0.853\dots,
\end{equation*}
In fact, this result can even be extended to appropriate families of smooth curves,
following~\cite[Theorem 1.1]{Shah2:2009}.

\medskip
\noindent\textbf{Acknowledgements.} 
We would like to thank Florin Boca, Yair Minsky, and Nimish Shah for useful
discussions. We would like to thank Alex Kontorovich for many helpful conversations and for contributing section~\ref{riemann}. We would like to thank Gergely Harcos for his comments on an early draft of this paper, and the anonymous referee for their suggestions and corrections. Finally, we would like to thank Jason Hempstead, Kaiyue Hou, and Danni Sun of the
Illinois
Geometry Lab for their work on numerical experiments.


\end{document}